\journal{Computer Methods in Applied Mechanics and Engineering}
\newcommand{\tnorm}{\@ifstar\@tnorms\@tnorm}
\newcommand{\@tnorms}[1]{%
  \left|\mkern-1.5mu\left|\mkern-1.5mu\left|
   #1
  \right|\mkern-1.5mu\right|\mkern-1.5mu\right|
}
\newcommand{\@tnorm}[2][]{%
  \mathopen{#1|\mkern-1.5mu#1|\mkern-1.5mu#1|}
  #2
  \mathclose{#1|\mkern-1.5mu#1|\mkern-1.5mu#1|}
}
\newcommand{\jump}[1]{\llbracket #1 \rrbracket}
\newtheorem{theorem}{Theorem}
\newtheorem{lemma}{Lemma}
\newtheorem{corollary}{Corollary}
\newdefinition{remark}{Remark}
\newproof{proof}{Proof}
\begin{document}
\begin{frontmatter}
  \title{An embedded--hybridized discontinuous Galerkin finite element
    method for the Stokes equations}
  \author[SR]{Sander Rhebergen\corref{cor1}\fnref{label1}}
  \ead{srheberg@uwaterloo.ca}
  \fntext[label1]{\url{https://orcid.org/0000-0001-6036-0356}}
  \author[GNW]{Garth N. Wells\fnref{label2}}
  \ead{gnw20@cam.ac.uk}
  \address[SR]{Department of Applied Mathematics, University of
    Waterloo, Canada} \address[GNW]{Department of Engineering,
    University of Cambridge, United Kingdom}
  \fntext[label2]{\url{https://orcid.org/0000-0001-5291-7951}}
\begin{abstract}
  We present and analyze a new embedded--hybridized discontinuous
  Galerkin finite element method for the Stokes problem. The method has
  the attractive properties of full hybridized methods, namely an $H({\rm
  div})$-conforming velocity field, pointwise satisfaction of the
  continuity equation and \emph{a priori} error estimates for the
  velocity that are independent of the pressure. The
  embedded--hybridized formulation has advantages over a full hybridized
  formulation in that it has fewer global degrees-of-freedom for a given
  mesh and the algebraic structure of the resulting linear system is
  better suited to fast iterative solvers. The analysis results are
  supported by a range of numerical examples that demonstrate rates of
  convergence, and which show computational efficiency gains over a full
  hybridized formulation.
\end{abstract}
\begin{keyword}
  Stokes equations \sep preconditioning \sep embedded \sep hybridized
  \sep discontinuous Galerkin finite element methods.
  \MSC[2010]
  65F08 \sep 
  65M15 \sep 
  65N12 \sep 
  65N30 \sep 
  76D07.     
\end{keyword}
\end{frontmatter}
\section{Introduction}
\label{sec:introduction}

Hybridized discontinuous Galerkin (HDG) methods were introduced with
the purpose of reducing the computational cost of discontinuous
Galerkin methods while retaining the attractive features. HDG methods
for the Stokes equations were introduced in \citep{Cockburn:2009c} for
the vorticity-velocity-pressure formulation of the Stokes problem, and
a modified version of this method for the velocity-pressure-gradient
formulation of the Stokes equations was introduced and analyzed
in~\citep{Cockburn:2011, Cockburn:2014b, Nguyen:2010}. An HDG method
for the velocity-pressure formulation of the Stokes equations was
analyzed in \citep{Rhebergen:2017}. To lower the computational cost of
HDG methods, embedded discontinuous Galerkin (EDG) \citep{Guzey:2007}
methods for incompressible flows have been developed which retain many
of the attractive features of discontinuous Galerkin methods but with
the same number of global degrees of freedom as a continuous Galerkin
method on a given mesh~\citep{labeur:2007, Labeur:2012}. The main
difference between EDG and HDG methods is the choice of function
spaces for the facet Lagrange multipliers. In the case of an HDG
method, the Lagrange multipliers are discontinuous between facets. To
reduce the number of degrees-of-freedom for a given mesh one may use
continuous Lagrange multipliers, leading to an EDG method.

The HDG method reduces the computational cost of discontinuous
Galerkin methods by introducing facet variables and eliminating local
(cell-wise) degrees-of-freedom, following ideas originally introduced
for mixed finite element methods, e.g.~\citep{Boffi:book}. This static
condensation can significantly reduce the size of the global problem
for higher-order discretizations. It is possible to reduce the problem
size of $H({\rm div})$-conforming HDG methods further by exploiting
that the normal component of the velocity is continuous across
facets. These methods only require to enforce continuity in the
tangential direction of the facet velocity \citep{Lehrenfeld:2010,
  Lehrenfeld:2016}. By the \emph{projected jumps} method, in which the
polynomial degree of the tangential facet velocity is reduced by one
compared to the cell velocity approximation, \citep{Lehrenfeld:2010,
  Lehrenfeld:2016} were able to lower the number of globally coupled
degrees-of-freedom even more. An alternative to increase the
performance of HDG methods is by a cell-wise post-processing. For
diffusion dominated, incompressible flows, post-processing techniques
have been introduced that result in super-convergent,
$H({\rm div})$-conforming, and point-wise divergence-free velocity
fields, see for example \citep{Cesmelioglu:2013, Cockburn:2011}.

A property that we are particularly interested in is \emph{pressure
robustness}, which is when the \emph{a priori} error estimate for the
velocity does not depend on the pressure error (scaled by the inverse of
the viscosity). A way to achieve pressure robustness is to devise a
finite element method with an $H({\rm div})$-conforming and
divergence-free velocity approximation. In a series of papers,
\citet{Cockburn:2002, Cockburn:2003, Cockburn:2004b, Cockburn:2007b}
introduced an $H({\rm div})$-conforming discontinuous Galerkin method
for incompressible flows (see also \citep{Wang:2007}). $H({\rm
div})$-conforming and divergence-free HDG methods were introduced and
analysed for incompressible flows by, for example \citep{Fu:2019,
Lehrenfeld:2010, Lehrenfeld:2016, Lederer:2017, Rhebergen:2017,
Rhebergen:2018a}. For other $H({\rm div})$-conforming finite element
methods we refer to, for example, \citep{Brennecke:2015, Lederer:2017b,
Linke:2016a, Linke:2016b, Schroeder:2018} and the review paper by
\citet{John:2017}. When a method is not $H({\rm div})$-conforming,
reconstruction operators \citep{Lederer:2017b, Linke:2016b} or
post-processing \citep{Cockburn:2004b, Guzman:2016} of the approximate
velocity field can be used to achieve pressure robustness.

Embedded discontinuous Galerkin (EDG) methods use continuous Lagrange
multipliers on facets, thereby reducing the number of globally coupled
degrees-of-freedom compared to HDG methods. However, some attractive
properties are lost. It was shown, for example, that there is no
post-processing of EDG methods that result in super-convergent solutions
\citep{Cockburn:2009}. Moreover, the approximate velocity field is not
$H({\rm div})$-conforming and as consequence the EDG method is not
pressure robust. In the context of the incompressible Navier--Stokes
equations, mass and momentum conservation cannot be satisfied
simultaneously~\citep{Labeur:2012}.

The number of globally coupled degrees-of-freedom alone is not
necessarily a good proxy for efficiency; efficiency will also depend on
the performance of linear solvers. We formulate and analyze a new
EDG--HDG method that retains the pressure-robustness property of HDG
methods, and has the efficiency characteristics of EDG methods. The
method yields a velocity field that is pointwise divergence-free and
automatically $H({\rm div})$-conforming. This is achieved through
hybridization via a facet pressure field that is discontinuous between
facets, as is typical for HDG methods. For the facet velocity field, we
use a continuous basis. This is desirable for substantially reducing the
number of global velocity degrees-of-freedom on a given mesh, and
continuous methods are generally observed to lead to better performance
of preconditioned iterative solvers. We present analysis for the EDG,
EDG--HDG and HDG formulations of the Stokes problem in a unified
setting. In particular, the analysis highlights key differences between
the methods in the context of pressure robustness.

We test performance numerically using the preconditioner developed and
analyzed in~\citep{Rhebergen:2018b}. As anticipated, the preconditioner
is more effective in terms of lower iteration counts for the more
regular EDG--HDG method compared to the HDG method. The fewer linear
solver iterations combined with the fewer global degrees-of-freedom for
the EDG--HDG method compared to the HDG method lead to the observation
via numerical experiments that the EDG--HDG method is considerably more
efficient in terms of the time required to reach a given discretization
error. It should be mentioned that our numerical experiments only
compare the `standard' EDG, HDG and EDG--HDG methods without taking
advantage of any modifications that can be made to reduce the problem
size even further, such as the projected jumps method. As such, these
numerical experiments serve to indicate the speed-up that is possible
when exploiting the `continuous' structure built into the EDG and
EDG--HDG methods compared to the HDG method.

The remainder of this article is organized as follows. In
\cref{sec:stokes} we introduce the HDG, EDG and EDG--HDG methods for the
Stokes problem, and we prove inf-sup stability for all three methods.
Error estimates are provided in \cref{sec:error_estimates}, and in
particular pressure robustness of the HDG and EDG--HDG methods is
considered. The error estimates are supported by numerical examples in
\cref{sec:numerical_examples}. Preconditioning is discussed in
\cref{sec:preconditioning}, with performance of the different methods
with preconditioned solvers examined by numerical examples. Conclusions
are drawn in \cref{sec:conclusions}.

\section{The hybridized, embedded and embedded-hybridized discontinuous
  Galerkin method}
\label{sec:stokes}

In this section we consider the embedded, hybridized, and
embedded--hybridized discontinuous Galerkin methods for the Stokes
problem:
\begin{subequations}
  \label{eq:stokes}
  \begin{align}
    -\nu\nabla^2u + \nabla p &= f & & \mbox{in} \ \Omega,
    \\
    \nabla\cdot u &= 0 & & \mbox{in}\ \Omega,
    \\
    u &= 0 & & \mbox{on}\ \partial\Omega,
    \\
    \int_{\Omega} p \dif x &= 0,
  \end{align}
\end{subequations}
where $\Omega \subset \mathbb{R}^d$ is a polygonal ($d = 2$) or
polyhedral ($d = 3$) domain, $u : \Omega \to \mathbb{R}^d$ is the
velocity, $p:\Omega \to \mathbb{R}$ is the pressure, $f : \Omega \to
\mathbb{R}^d$ is the prescribed body force, and $\nu \in \mathbb{R}^+$
is a given constant kinematic viscosity.

\subsection{Notation}

Let $\mathcal{T} := \{K\}$ be a triangulation of $\Omega$. This
triangulation consists of non-overlapping simplicial cells $K$. The
length measure of a cell $K$ is denoted by $h_K$. The outward unit
normal vector, on the boundary of a cell, $\partial K$, is denoted
by~$n$. An interior facet $F$ is shared by two adjacent cells $K^+$ and
$K^-$ while a boundary facet is a facet of $\partial K$ that lies on
$\partial \Omega$. The set and union of all facets are denoted by,
respectively, $\mathcal{F} = \{F\}$ and~$\Gamma_0$.

We consider the following discontinuous finite element function spaces
on~$\Omega$:
\begin{equation}
  \begin{split}
    V_h  &:= \cbr{v_h\in \sbr[1]{L^2(\Omega)}^d
        : \ v_h \in \sbr{P_k(K)}^d, \ \forall\ K\in\mathcal{T}},
    \\
    Q_h &:= \cbr{q_h\in L^2(\Omega) : \ q_h \in P_{k-1}(K) ,\
    \forall \ K \in \mathcal{T}},
  \end{split}
  \label{eqn:spaces_cell}
\end{equation}
where $P_k(K)$ denotes the set of polynomials of degree $k$ on a
cell~$K$. On $\Gamma_{0}$ we consider the finite element spaces
\begin{equation}
  \begin{split}
    \bar{V}_h &:= \cbr{\bar{v}_h \in \sbr[1]{L^2(\Gamma_0)}^d : \ \bar{v}_h \in
      \sbr{P_{k}(F)}^d \ \forall \ F \in \mathcal{F}, \ \bar{v}_h
      = 0 \ \mbox{on}\ \partial\Omega},
    \\
    \bar{Q}_h &:= \cbr{\bar{q}_h \in L^2(\Gamma_0) : \ \bar{q}_h \in
      P_{k}(F) \ \forall\ F \in \mathcal{F}},
  \end{split}
  \label{eqn:HDG_spaces_facet}
\end{equation}
where $P_k(F)$ denotes the set of polynomials of degree $k$ on a
facet~$F$. We also introduce the extended function spaces
\begin{align}
  V(h) &:= V_h + \sbr[1]{H_0^1(\Omega)}^d \cap \sbr[1]{H^2(\Omega)}^d,
  \\
  Q(h) &:= Q_h + L_0^2(\Omega) \cap H^1(\Omega),
\end{align}
and
\begin{align}
  \bar{V}(h) &:= \bar{V}_h + [H_0^{3/2}(\Gamma_0)]^d,
  \\
  \bar{Q}(h) &:= \bar{Q}_h + H^{1/2}_0(\Gamma_0).
\end{align}

We define two norms on $V(h) \times \bar{V}(h)$, namely,
\begin{equation}
\label{eq:stability_norm}
  \tnorm{ \boldsymbol{v} }_v^2 := \sum_{K\in\mathcal{T}}\norm{\nabla v }^2_{K}
  + \sum_{K\in\mathcal{T}} \frac{\alpha_{v}}{h_K}\norm{\bar{v} - v}^2_{\partial K},
\end{equation}
and
\begin{equation}
  \label{eq:boundedness_norm}
  \tnorm{ \boldsymbol{v} }_{v'}^2 := \tnorm{ \boldsymbol{v} }_v^2
  + \sum_{K\in\mathcal{T}}\frac{h_K}{\alpha_v}\norm{\frac{\partial v}{\partial n}}^2_{\partial K},
\end{equation}
where $\alpha_v > 0$ is a penalty parameter that will be defined
later. From the discrete trace
inequality~\cite[Remark~1.47]{Pietro:book},
\begin{equation}
  \label{eq:trace_inequality}
  h_K^{1/2}\norm{v_h}_{\partial K}\le C_t\norm{v_h}_{K}
  \qquad \forall v_h \in P_k(K),
\end{equation}
where $C_{t}$ depends on $k$, spatial dimension and cell shape, it
follows that the norms $\tnorm{\cdot}_v$ and $\tnorm{\cdot}_{v'}$ are
equivalent on the finite element space~$V_h \times \bar{V}_{h}$:
\begin{equation}
  \label{eq:equivalentNorms_v_vprime}
  \tnorm{ \boldsymbol{v}_h }_{v} \le \tnorm{ \boldsymbol{v}_h }_{v'}
  \le
  c(1+\alpha_v^{-1})\tnorm{ \boldsymbol{v}_h }_{v}
  \quad \forall \boldsymbol{v}_h \in V_h \times \bar{V}_{h},
\end{equation}
where $c > 0$ a constant independent of $h$,
see~\cite[Eq.~(5.5)]{Wells:2011}.

On $\bar{Q}(h)$ and $Q(h) \times \bar{Q}(h)$ we introduce, respectively,
\begin{equation}
  \label{eq:def_normbarq}
    \norm{\bar{q}}_{p}^2 := \sum_{K\in\mathcal{T}}h_K\norm{\bar{q}}^2_{\partial K},
    \qquad
    \tnorm{\boldsymbol{q}}^2_p := \norm{q}^2_{\Omega} + \norm{\bar{q}}_{p}^2.
\end{equation}

\subsection{Weak formulation}
\label{sec:weak_forms}

Consider the bilinear form
\begin{equation}
  \label{eq:Bh}
  B_h((\boldsymbol{u}_h, \boldsymbol{p}_h), (\boldsymbol{v}_h, \boldsymbol{q}_h))
  :=
  a_h(\boldsymbol{u}_h, \boldsymbol{v}_h) + b_h(\boldsymbol{p}_h, v_h)
      - b_h(\boldsymbol{q}_h, u_h),
\end{equation}
where
\begin{subequations}
  \begin{align}
    \label{eq:formA}
    a_h(\boldsymbol{u}, \boldsymbol{v}) :=&
    \sum_{K\in\mathcal{T}}\int_{K}\nu \nabla u : \nabla v \dif x
    + \sum_{K\in\mathcal{T}}\int_{\partial K}\frac{\nu \alpha_{v}}{h_K}(u - \bar{u})\cdot(v - \bar{v}) \dif s
    \\
    \nonumber
    &- \sum_{K\in\mathcal{T}}\int_{\partial K}\nu \sbr{(u-\bar{u})\cdot \frac{\partial v}{\partial n}
    + \frac{\partial u}{\partial n}\cdot(v-\bar{v})} \dif s,
    \\
    \label{eq:formB}
    b_h(\boldsymbol{p}, v) :=&
    - \sum_{K\in\mathcal{T}}\int_{K}p \nabla \cdot v \dif x
    + \sum_{K\in\mathcal{T}}\int_{\partial K}v\cdot n\bar{p} \dif s.
  \end{align}
  \label{eq:bilin_forms}
\end{subequations}
The methods involve: find $(\boldsymbol{u}_h, \boldsymbol{p}_h) \in X_{h}$
such that
\begin{equation}
  \label{eq:discrete_problem}
  B_h((\boldsymbol{u}_h, \boldsymbol{p}_h), (\boldsymbol{v}_h, \boldsymbol{q}_h))
  = \int_{\Omega} f \cdot v_h \dif x
  \qquad \forall (\boldsymbol{v}_h, \boldsymbol{q}_h) \in X_{h},
\end{equation}
where $X_{h} = X_{h}^{v} \times X_{h}^{q}$, and the
different formulations use the following spaces:
\begin{equation}
  X_{h}
  :=
  \begin{cases}
    \del{V_{h} \times \bar{V}_{h}} \times \del{Q_{h} \times \bar{Q}_{h}} & \text{HDG method},
    \\
    \del{V_{h} \times (\bar{V}_{h} \cap C^{0}(\Gamma_{0})} \times \del{Q_{h} \times \bar{Q}_{h}}
        & \text{EDG--HDG method},
    \\
    \del{V_{h} \times (\bar{V}_{h} \cap C^{0}(\Gamma_{0})}
      \times \del{Q_{h} \times (\bar{Q}_{h} \cap C^{0}(\Gamma_{0})} & \text{EDG method}.
  \end{cases}
\label{eq:method_spaces}
\end{equation}
The HDG method uses facet function spaces that are discontinuous. In the
EDG--HDG method the facet velocity field is continuous and the facet
pressure field is discontinuous, and in the EDG method both velocity and
pressure facet functions are continuous.

All three formulations yield computed velocity fields that are
pointwise solenoidal on cells. This is an immediate consequence of
$\nabla \cdot v_h \in Q_h$ for all $v_h \in V_h$. For the HDG and
EDG--HDG formulations the facet pressure is discontinuous (lying in
$\bar{Q}_{h}$), in which case it is straightforward to show that
$u_{h} \in H({\rm div}, \Omega)$, i.e.~the normal component of the
computed velocity $u_{h}$ is continuous across cell facets
\citep{Rhebergen:2017}. We will therefore refer to the HDG and
EDG--HDG methods as being \emph{$H({\rm div})$-conforming}. In the
case of the EDG method, the normal component of the velocity is only
weakly continuous across cell facets.

The HDG variant of the formulation was analyzed in
\citep{Rhebergen:2017}, but pressure robustness, in which the \emph{a
  priori} error estimate for the velocity does not depend on the
pressure error, was not proven. We generalize and extend analysis
results to include the EDG and EDG--HDG formulations, and to prove
pressure robustness of the HDG and EDG--HDG formulations. In the
following analysis we present, where possible, results that hold
without reference to a specific method. Where this is not possible we
comment explicitly on the conditions for a result to hold for a
specific method.

\subsection{Consistency}

We consider the following space for the exact solution to the Stokes
problem:
\begin{equation}
  X := \del{\sbr[1]{H_0^1(\Omega)}^d \cap \sbr[1]{H^2(\Omega)}^d} \times
  \del{L_0^2(\Omega) \cap H^1(\Omega)}.
\end{equation}
\begin{lemma}
  \label{lem:consistency}
  Let $(u, p) \in X$ solve the Stokes problem \cref{eq:stokes} and let
  $\boldsymbol{u} = (u, u)$ and $\boldsymbol{p} = (p, p)$. Then
  \begin{equation}
    \label{eq:consistency}
    B_h((\boldsymbol{u}, \boldsymbol{p}), (\boldsymbol{v}_h, \boldsymbol{q}_h))
    =
    \int_{\Omega} f \cdot v_h \dif x \quad
    \forall (\boldsymbol{v}_h,\boldsymbol{q}_h) \in X_h.
  \end{equation}
\end{lemma}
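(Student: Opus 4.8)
The plan is to show that the exact solution, lifted to the facet spaces via $\boldsymbol{u}=(u,u)$ and $\boldsymbol{p}=(p,p)$, satisfies the discrete weak form by substituting into $B_h$ and reducing it, using the regularity of $(u,p)\in X$, to the weak form of the original PDE tested against $v_h$. The central structural fact I would exploit is that since $u\in[H^2(\Omega)]^d\cap[H_0^1(\Omega)]^d$ is single-valued, the jump terms $u-\bar u$ vanish on every facet (as $\bar u$ is the trace of $u$), and likewise $p-\bar p=0$. This immediately kills the penalty term and the first half of the symmetric consistency term in $a_h$, so those terms contribute nothing.

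First I would write out $B_h((\boldsymbol{u},\boldsymbol{p}),(\boldsymbol{v}_h,\boldsymbol{q}_h))$ in full using \cref{eq:Bh} and \cref{eq:bilin_forms}. After dropping the vanishing terms, what survives in $a_h$ is the volume term $\sum_K\int_K \nu\nabla u:\nabla v_h$ together with the normal-flux term $-\sum_K\int_{\partial K}\nu\,\tfrac{\partial u}{\partial n}\cdot(v_h-\bar v_h)$. For the $b_h$ contributions, substituting $\boldsymbol{p}=(p,p)$ into $b_h(\boldsymbol{p},v_h)$ gives $-\sum_K\int_K p\,\nabla\cdot v_h+\sum_K\int_{\partial K}v_h\cdot n\,p$, and substituting $\boldsymbol{q}_h$ into $b_h(\boldsymbol{q}_h,u)$ leaves only $-\sum_K\int_K q_h\,\nabla\cdot u$ since $u-\bar u=0$ removes the facet term; but $\nabla\cdot u=0$ in $\Omega$, so this entire $b_h(\boldsymbol{q}_h,u)$ block vanishes.

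Next I would integrate the volume Laplacian term by parts cell-by-cell: $\sum_K\int_K\nu\nabla u:\nabla v_h = -\sum_K\int_K\nu(\nabla^2 u)\cdot v_h + \sum_K\int_{\partial K}\nu\,\tfrac{\partial u}{\partial n}\cdot v_h$. The resulting boundary term $\sum_K\int_{\partial K}\nu\,\tfrac{\partial u}{\partial n}\cdot v_h$ combines with the surviving normal-flux term $-\sum_K\int_{\partial K}\nu\,\tfrac{\partial u}{\partial n}\cdot(v_h-\bar v_h)$ to leave $\sum_K\int_{\partial K}\nu\,\tfrac{\partial u}{\partial n}\cdot\bar v_h$. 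Because $u\in[H^2(\Omega)]^d$, the normal flux $\nu\,\partial u/\partial n$ is single-valued across interior facets while the outward normals of $K^+$ and $K^-$ are opposite; since $\bar v_h$ is single-valued on each facet, the interior-facet contributions cancel pairwise, and on $\partial\Omega$ the term vanishes because $\bar v_h=0$ there by the definition of $\bar V_h$. The same pairing argument applied to the pressure facet term $\sum_K\int_{\partial K}v_h\cdot n\,p$ shows it collapses. After all these cancellations I would collect the remaining volume terms as $\sum_K\int_K(-\nu\nabla^2 u+\nabla p)\cdot v_h$ (pulling $\nabla p$ out of $-\int_K p\,\nabla\cdot v_h$ by a further integration by parts whose boundary term cancels against $\int_{\partial K}v_h\cdot n\,p$), and invoke the momentum equation $-\nu\nabla^2 u+\nabla p=f$ to obtain $\int_\Omega f\cdot v_h$.

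The main obstacle, and the step requiring the most care, is the bookkeeping of the facet integrals: correctly tracking signs of the opposing normals $n$ on shared facets, confirming the single-valuedness of $\nu\,\partial u/\partial n$ and of $p$ (which needs $p\in H^1(\Omega)$ so its trace is well defined and continuous), and verifying that the single-valued facet test functions $\bar v_h,\bar q_h$ make the cross-facet sums telescope to zero. This argument is uniform across the HDG, EDG and EDG--HDG choices of $X_h$ in \cref{eq:method_spaces}, since continuity of the facet spaces only restricts the test functions further and never obstructs these cancellations; I would note this explicitly so that \cref{lem:consistency} holds for all three methods simultaneously.
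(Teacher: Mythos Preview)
Your overall approach is the standard integration-by-parts consistency computation and is essentially correct. The paper itself does not reprove this; it simply cites \cite[Lemma~3.1]{Rhebergen:2017} for the HDG case and remarks that the EDG and EDG--HDG variants follow because their test spaces are subspaces. Your direct argument is therefore more self-contained than what appears here.

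Two steps in your reasoning need correction, though neither breaks the overall proof. First, your stated reason for why the facet term in $b_h(\boldsymbol{q}_h,u)$ vanishes is wrong: that term is $\sum_K\int_{\partial K}u\cdot n\,\bar q_h$, which contains no $\bar u$, so ``$u-\bar u=0$'' is irrelevant. It vanishes by the same single-valuedness/opposite-normal pairing you later invoke for $\nu\,\partial u/\partial n\cdot\bar v_h$, together with $u=0$ on~$\partial\Omega$. Second, the sentence ``the same pairing argument applied to $\sum_K\int_{\partial K}v_h\cdot n\,p$ shows it collapses'' is incorrect as stated: $v_h\in V_h$ is cell-wise discontinuous, so contributions from adjacent cells do \emph{not} cancel across a shared facet. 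The correct mechanism is exactly the one you give in the parenthetical that follows---integrate $-\int_K p\,\nabla\cdot v_h$ by parts to obtain $\int_K\nabla p\cdot v_h-\int_{\partial K}p\,v_h\cdot n$, and this boundary term cancels the facet term of $b_h(\boldsymbol{p},v_h)$ cell by cell, with no cross-facet pairing needed. With those two fixes the argument goes through cleanly.
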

\begin{proof}
  See \cite[Lemma 3.1]{Rhebergen:2017}. \qed
\end{proof}
The result in \cite[Lemma~3.1]{Rhebergen:2017} holds trivially for the
EDG--HDG and EDG methods as they involve subspaces of the HDG method.

\subsection{Stability and boundedness}
\label{ss:stab_bound}

Stability and boundedness of the vector-Laplacian term, $a_h$, for the
EDG and EDG--HDG method is a direct consequence of the stability and
boundedness results of the HDG method proven in~\citep{Rhebergen:2017}.
We state these results here for completeness.
\begin{lemma}[Stability of $a_h$]
  \label{lem:stab_ah}
  There exists a $\beta_{v} > 0$, independent of $h$, and a constant
  $\alpha_{0} > 0$ such that for $\alpha_{v} > \alpha_{0}$ and for
  all~$\boldsymbol{v}_{h} \in V_{h} \times \bar{V}_{h}$
  \begin{equation}
    \label{eq:stab_ah}
    a_h(\boldsymbol{v}_h, \boldsymbol{v}_h) \ge \nu \beta_v \tnorm{\boldsymbol{v}_h}^2_v.
  \end{equation}
\end{lemma}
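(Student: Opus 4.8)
The plan is to establish coercivity of the bilinear form $a_h$ on the finite element space by bounding it below by the stability norm $\tnorm{\cdot}_v^2$ up to the viscosity factor. Writing out $a_h(\boldsymbol{v}_h, \boldsymbol{v}_h)$ from \cref{eq:formA}, the diagonal consists of the volume term $\sum_K \nu\norm{\nabla v_h}_K^2$ and the penalty term $\sum_K \tfrac{\nu\alpha_v}{h_K}\norm{v_h - \bar{v}_h}_{\partial K}^2$, both of which are manifestly nonnegative and already match (up to the factor $\nu$) the two terms appearing in the definition of $\tnorm{\boldsymbol{v}_h}_v^2$ in \cref{eq:stability_norm}. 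The only obstacle to immediate coercivity is the symmetric consistency term $-\sum_K \nu\sbr{(v_h - \bar{v}_h)\cdot \tfrac{\partial v_h}{\partial n} + \tfrac{\partial v_h}{\partial n}\cdot(v_h - \bar{v}_h)}$, which is not sign-definite and must be controlled.

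First I would treat this troublesome term. Combining the two copies, it equals $-2\sum_K \nu\int_{\partial K}(v_h - \bar{v}_h)\cdot \tfrac{\partial v_h}{\partial n}\dif s$, and I would bound it using a weighted Cauchy--Schwarz (Young's) inequality, splitting the product so that one factor carries $h_K^{-1/2}$ and the other $h_K^{1/2}$: for any $\epsilon > 0$,
\begin{equation}
  2\nu\int_{\partial K}\envert{(v_h - \bar{v}_h)\cdot \tfrac{\partial v_h}{\partial n}}\dif s
  \le \frac{\nu\epsilon}{h_K}\norm{v_h - \bar{v}_h}_{\partial K}^2
  + \frac{\nu h_K}{\epsilon}\norm{\tfrac{\partial v_h}{\partial n}}_{\partial K}^2.
\end{equation}
The key step is then to absorb the normal-derivative boundary term back into the volume term. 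Since $\partial v_h/\partial n$ is a polynomial of degree $k-1$ on each face, the discrete trace inequality \cref{eq:trace_inequality} gives $h_K\norm{\partial v_h/\partial n}_{\partial K}^2 \le C_t^2 \norm{\nabla v_h}_K^2$, so that $\tfrac{\nu h_K}{\epsilon}\norm{\partial v_h/\partial n}_{\partial K}^2 \le \tfrac{\nu C_t^2}{\epsilon}\norm{\nabla v_h}_K^2$. Summing over $K$ and inserting these estimates yields
\begin{equation}
  a_h(\boldsymbol{v}_h, \boldsymbol{v}_h)
  \ge \nu\del{1 - \frac{C_t^2}{\epsilon}}\sum_K \norm{\nabla v_h}_K^2
  + \nu\sum_K \frac{\alpha_v - \epsilon}{h_K}\norm{v_h - \bar{v}_h}_{\partial K}^2.
\end{equation}

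To finish I would choose the free parameters so both coefficients are positive and bounded below by a common $\beta_v > 0$. Picking $\epsilon = 2C_t^2$ makes the gradient coefficient equal to $\nu/2$; then requiring $\alpha_v - \epsilon \ge \tfrac{1}{2}\alpha_v$, i.e. $\alpha_v \ge 2\epsilon = 4C_t^2 =: \alpha_0$, makes the penalty coefficient at least $\tfrac{\nu\alpha_v}{2h_K}$. With these choices one obtains \cref{eq:stab_ah} with $\beta_v = \tfrac{1}{2}$, which is independent of $h$ as required. The main obstacle is purely the sign-indefinite consistency term, and the crux is balancing $\epsilon$ against both $C_t^2$ (to keep the gradient term coercive) and $\alpha_v$ (to keep the penalty term coercive); since $C_t$ depends only on $k$, dimension, and cell shape, the resulting $\alpha_0$ and $\beta_v$ are indeed $h$-independent. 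Because the argument uses only the trace inequality on $V_h \times \bar{V}_h$, it applies verbatim to the EDG and EDG--HDG spaces, which are subspaces.
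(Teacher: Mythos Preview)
Your argument is correct and is precisely the standard interior-penalty coercivity proof that the paper defers to via the citation to \cite[Lemma~4.2]{Rhebergen:2017}: expand $a_h(\boldsymbol{v}_h,\boldsymbol{v}_h)$, control the sign-indefinite consistency term by Young's inequality and the discrete trace inequality \cref{eq:trace_inequality}, and choose $\epsilon$ and $\alpha_0$ to make both coefficients uniformly positive. There is nothing to add.
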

\begin{proof}
  See \cite[Lemma~4.2]{Rhebergen:2017}. \qed
\end{proof}
\begin{lemma}[Boundedness of $a_h$]
  \label{lem:bound_ah}
  There exists a $c > 0$, independent of $h$, such that for all
  $\boldsymbol{u}\in V(h) \times \bar{V}(h)$ and for all
  $\boldsymbol{v}_h \in V_{h} \times \bar{V}_{h}$
  \begin{equation}
    \label{eq:bound_ah}
    \envert{a_h(\boldsymbol{u}, \boldsymbol{v}_h)}
    \le C_a \nu \tnorm{\boldsymbol{u}}_{v'} \tnorm{\boldsymbol{v}_h}_v,
  \end{equation}
  with $C_a = c(1+\alpha_v^{-1/2})$.
\end{lemma}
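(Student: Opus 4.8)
The plan is to bound each of the three groups of terms in $a_h$ separately by the Cauchy--Schwarz inequality, exploiting the asymmetric roles of the two arguments: $\boldsymbol{v}_h$ is a finite element function to which the discrete trace inequality \cref{eq:trace_inequality} applies, whereas $\boldsymbol{u}$ lies in the larger space $V(h) \times \bar{V}(h)$ and is therefore measured in the stronger norm $\tnorm{\cdot}_{v'}$. The three groups are the volume term $\sum_K \int_K \nu \nabla u : \nabla v_h$, the penalty term $\sum_K \int_{\partial K} (\nu \alpha_v/h_K)(u-\bar{u})\cdot(v_h-\bar{v}_h)$, and the two consistency terms involving normal derivatives.

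For the volume term, Cauchy--Schwarz on each cell followed by a discrete Cauchy--Schwarz over the cells gives a bound by $\nu \tnorm{\boldsymbol{u}}_v \tnorm{\boldsymbol{v}_h}_v$. The penalty term is handled identically: writing the integrand as a product of $(\alpha_v/h_K)^{1/2}(u-\bar{u})$ and $(\alpha_v/h_K)^{1/2}(v_h-\bar{v}_h)$, Cauchy--Schwarz yields $\nu \tnorm{\boldsymbol{u}}_v \tnorm{\boldsymbol{v}_h}_v$. Since $\tnorm{\boldsymbol{u}}_v \le \tnorm{\boldsymbol{u}}_{v'}$ by definition, both contributions are already of the required form.

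The consistency terms require care and constitute the main obstacle. For the term $\sum_K \int_{\partial K} \nu (u-\bar{u})\cdot \partial v_h/\partial n$, I would apply the discrete trace inequality \cref{eq:trace_inequality} to the polynomial $v_h$ to bound $\norm{\partial v_h/\partial n}_{\partial K}$ by $C_t h_K^{-1/2}\norm{\nabla v_h}_K$; pairing the resulting factor with $(\alpha_v/h_K)^{1/2}\norm{u-\bar{u}}_{\partial K}$ and extracting the surplus $\alpha_v^{-1/2}$ produces a bound $\nu C_t \alpha_v^{-1/2}\tnorm{\boldsymbol{u}}_v \tnorm{\boldsymbol{v}_h}_v$. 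The discrete trace inequality is unavailable for $\boldsymbol{u}$, so the companion term $\sum_K \int_{\partial K} \nu\, \partial u/\partial n \cdot(v_h-\bar{v}_h)$ must instead be bounded by pairing $(h_K/\alpha_v)^{1/2}\norm{\partial u/\partial n}_{\partial K}$ with $(\alpha_v/h_K)^{1/2}\norm{v_h-\bar{v}_h}_{\partial K}$; here the first factor is precisely the term that distinguishes $\tnorm{\cdot}_{v'}$ from $\tnorm{\cdot}_v$, giving $\nu \tnorm{\boldsymbol{u}}_{v'}\tnorm{\boldsymbol{v}_h}_v$. This is exactly why the boundedness norm $\tnorm{\cdot}_{v'}$ is required on the first argument.

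Collecting the four contributions and using $\tnorm{\boldsymbol{u}}_v \le \tnorm{\boldsymbol{u}}_{v'}$ throughout gives $\envert{a_h(\boldsymbol{u},\boldsymbol{v}_h)} \le \nu(3 + C_t \alpha_v^{-1/2})\tnorm{\boldsymbol{u}}_{v'}\tnorm{\boldsymbol{v}_h}_v$, so the claim holds with $c = \max(3, C_t)$, independent of $h$, since $3 + C_t\alpha_v^{-1/2} \le \max(3,C_t)(1+\alpha_v^{-1/2})$. The only genuinely delicate point is the asymmetric handling of the two normal-derivative terms, together with the bookkeeping of the $\alpha_v$ powers needed to land on the stated constant $C_a = c(1+\alpha_v^{-1/2})$.
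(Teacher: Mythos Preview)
Your proof is correct and follows the standard argument: the paper itself does not spell out the proof but defers to \cite[Lemma~4.3]{Rhebergen:2017}, and your Cauchy--Schwarz treatment of each term---with the discrete trace inequality applied to the polynomial argument and the extra $\tnorm{\cdot}_{v'}$ term absorbing the normal derivative of the non-polynomial argument---is precisely that argument. The bookkeeping leading to $C_a = c(1+\alpha_v^{-1/2})$ is handled correctly.
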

\begin{proof}
  See~\cite[Lemma~4.3]{Rhebergen:2017}. \qed
\end{proof}
\Cref{lem:stab_ah,lem:bound_ah} hold trivially for the EDG--HDG and
EDG formulations as velocity fields in both cases are subspaces
of~$V_{h} \times \bar{V}_{h}$.

The velocity-pressure coupling in \cref{eq:discrete_problem} is
provided by:
\begin{equation}
  \label{eq:formB_b1b2}
  b_h(\boldsymbol{p}_h, v_h) := b_1(p_h, v_h) + b_2(\bar{p}_h, v_h),
\end{equation}
where
\begin{equation}
  \label{eq:bh1bh2}
  b_1(p_h, v_h) := -\sum_{K\in\mathcal{T}} \int_K p_h \nabla \cdot v_h \dif x
  \quad \text{and} \quad
  b_2(\bar{p}_h, v_h) := \sum_{K\in\mathcal{T}}\int_{\partial K} v_h \cdot n \bar{p}_h \dif s.
\end{equation}
We consider the inf-sup condition for $b_{1}$ and $b_{2}$ separately
first, after which we prove inf-sup stability for~$b_{h}$.

It is useful to introduce the Brezzi--Douglas--Marini (BDM) finite
element space, $V_h^{\rm BDM}$ (see \citep{Boffi:book}):
\begin{equation}
  \begin{split}
    V_h^{\rm BDM}(K)
    &:= \cbr{ v_h \in \sbr{P_k(K)}^d : v_h\cdot n \in L^2(\partial K),\
      v_h\cdot n|_F\in P_k(F)},
    \\
    V_h^{\rm BDM}
    &:= \cbr{v_h \in H({\rm div};\Omega) : v_h|_K \in V_h^{\rm BDM}(K) \
      \forall K \in \mathcal{T}},
  \end{split}
\end{equation}
and the following interpolation operator~\cite[Lemma~7]{Hansbo:2002}.
\begin{lemma}
  \label{lem:BDMprojection}
  If the mesh consists of triangles in two dimensions or tetrahedra in
  three dimensions there is an interpolation operator $\Pi_{\rm BDM} :
  [H^1(\Omega)]^d \to V_h$ with the following properties for all $u\in
  [H^{k+1}(K)]^d$ where $k \ge 1$:
  \begin{enumerate}[(i)]
    \item $\jump{n\cdot\Pi_{\rm BDM} u} = 0$, where $\jump{a} =
      a^{+} +a^{-}$ and $\jump{a}=a$ on, respectively, interior and
      boundary faces is the usual jump operator.
    \item
      \label{item:bdm_inverse_inequality}
    $\norm{u-\Pi_{\rm BDM} u}_{m,K} \le ch_K^{l - m} \norm{u}_{l,K}$ with $m = 0, 1, 2$
    and $m\le l \le k+1$.
    \item $\norm{\nabla\cdot(u - \Pi_{\rm BDM} u)}_{m, K} \le c
      h_K^{l - m} \norm{\nabla\cdot u}_{l, K}$ with $m = 0, 1$ and $m
      \le l \le k$.
    \item \label{item:bdm_div}
    $\int_K q(\nabla\cdot u - \nabla\cdot\Pi_{\rm BDM}u)
      \dif x = 0$ for all $q \in P_{k - 1}(K)$.
    \item $\int_F \bar{q}(n\cdot u - n\cdot \Pi_{\rm BDM} u) \dif s
      = 0$ for all $\bar{q}\in P_k(F)$, where $F$ is a face on~$\partial
      K$.
  \end{enumerate}
\end{lemma}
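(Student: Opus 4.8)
The plan is to construct $\Pi_{\rm BDM}$ cell-by-cell through the canonical degrees of freedom of the BDM$_k$ element and then verify each of the five properties in turn. On a single simplex $K$ the local space is $[P_k(K)]^d$, equipped with the functionals
\[
 \mu \mapsto \int_F (v\cdot n)\,\mu \dif s \quad (\mu\in P_k(F),\ F\subset\partial K),
 \qquad
 \eta \mapsto \int_K v\cdot\eta \dif x \quad (\eta\in N_{k}(K)),
\]
where $N_k(K)$ is the standard interior moment space, chosen so that the full set of functionals is unisolvent and, crucially, so that $\nabla P_{k-1}(K)\subseteq N_k(K)$. Unisolvence on the reference simplex is classical, so $\Pi_{\rm BDM}u$ is well defined on each $K$ by matching these functionals applied to $u$; the face functionals are bounded on $[H^1(K)]^d$ by the trace theorem (for $k\ge1$ the $H^1$-trace lies in $L^2(F)$), so $\Pi_{\rm BDM}:[H^1(\Omega)]^d\to V_h$ is well defined and bounded. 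Property (v) is then exactly the definition of the face functionals. Since an interior face is shared by two cells with opposite outward normals and identical face functionals, $n\cdot\Pi_{\rm BDM}u$ is single-valued across facets, i.e.\ $\jump{n\cdot\Pi_{\rm BDM}u}=0$; this is property (i), and it places $\Pi_{\rm BDM}u$ in $V_h^{\rm BDM}\subset H({\rm div};\Omega)$.

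For the approximation estimate (ii) I would use that $\Pi_{\rm BDM}$ is a projection onto a space containing $[P_k(K)]^d$, hence reproduces vector polynomials of degree $\le k$ exactly. Combining polynomial reproduction with the Bramble--Hilbert lemma on the reference element and a standard scaling argument (tracking the powers of $h_K$ through the affine map, using shape-regularity of the simplex) yields the stated bound for $m=0,1,2$. Estimate (iii) I would obtain as a consequence of the commuting property (iv): once $\nabla\cdot\Pi_{\rm BDM}u$ is identified with the $L^2(K)$-projection of $\nabla\cdot u$ onto $P_{k-1}(K)$, the error $\nabla\cdot(u-\Pi_{\rm BDM}u)$ is simply the projection error of $\nabla\cdot u$, and the claimed rate follows from the standard approximation property of the $L^2$-projection.

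The commuting property (iv) is the heart of the lemma and the step I expect to require the most care. Fix $q\in P_{k-1}(K)$ and integrate by parts,
\[
 \int_K q\,\nabla\cdot\Pi_{\rm BDM}u \dif x
 = -\int_K \nabla q\cdot\Pi_{\rm BDM}u \dif x + \int_{\partial K} q\,(n\cdot\Pi_{\rm BDM}u)\dif s.
\]
On each face $q|_F\in P_{k-1}(F)\subset P_k(F)$, so by (v) the boundary term equals $\int_{\partial K} q\,(n\cdot u)\dif s$. For the volume term, $\nabla q\in\nabla P_{k-1}(K)\subseteq N_k(K)$, so the interior functionals give $\int_K \nabla q\cdot\Pi_{\rm BDM}u\dif x=\int_K \nabla q\cdot u\dif x$. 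Integrating by parts back,
\[
 \int_K q\,\nabla\cdot\Pi_{\rm BDM}u \dif x
 = -\int_K \nabla q\cdot u \dif x + \int_{\partial K} q\,(n\cdot u)\dif s
 = \int_K q\,\nabla\cdot u\dif x,
\]
which is precisely (iv). The delicate point, and the reason the interior moment space must be fixed with care at the outset, is that $N_k(K)$ has to simultaneously render the element unisolvent and contain $\nabla P_{k-1}(K)$ so that this identity closes; verifying that such a choice exists is the classical BDM construction and is where the real work lies, the remaining manipulations being routine integration by parts.
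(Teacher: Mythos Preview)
Your argument is correct and follows the classical construction of the BDM interpolant: define the local projector through the face moments and interior moments, read off (v) and (i) directly from the definition, obtain (ii) via polynomial reproduction and Bramble--Hilbert, derive (iv) by integration by parts using that the interior moment space contains $\nabla P_{k-1}(K)$, and finally get (iii) from the identification of $\nabla\cdot\Pi_{\rm BDM}u$ with the $L^2(K)$-projection of $\nabla\cdot u$ onto $P_{k-1}(K)$. The one point worth making explicit is the degenerate case $k=1$, where there are no interior moments; then $\nabla P_0(K)=\{0\}$ and the volume term in your integration-by-parts identity vanishes trivially, so (iv) still closes.

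However, the paper does not actually prove this lemma. It is stated as a quotation of \cite[Lemma~7]{Hansbo:2002} and used as a black box throughout the analysis; there is no accompanying proof environment. So there is nothing to compare against: your sketch supplies exactly the standard argument that the cited reference (and, ultimately, the original BDM papers and the treatment in Boffi--Brezzi--Fortin) contains, whereas the present paper simply imports the result.
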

We will also use an interpolation operator $\mathcal{I}_{h} :
H^1(\Omega) \to V_{h} \cap C^{0}(\bar{\Omega})$ with the following
property:
\begin{equation}
  \label{eq:interpolationError}
  \sum_{K} h^{-2}_{K} \norm{v - \mathcal{I}_{h}v}_{0, K}^{2}
    \le c \envert{v}^{2}_{1, \Omega},
\end{equation}
for example, the Scott--Zhang interpolant (see
\cite[Theroem~4.8.12]{brenner:book}).

\begin{lemma}[Stability of $b_1$]
  \label{thm:stab_b1}
  There exists a constant $\beta_{1} > 0$, independent of $h$, such
  that for all~$q_h \in Q_h$
  \begin{equation}
    \label{eq:stab_b1}
    \beta_{1} \norm{q_h}_{\Omega}
    \le
    \sup_{\boldsymbol{v}_h \in V_h^{\rm BDM} \times \del{\bar{V}_{h} \cap C^{0}(\Gamma_{0})}}
    \frac{ b_1(q_h, v_h) }{\tnorm{ \boldsymbol{v}_h }_{v}}.
  \end{equation}
\end{lemma}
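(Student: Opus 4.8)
The plan is to exhibit an explicit velocity field that makes the supremum large. Given $q_h \in Q_h$, the starting point is the classical continuous inf-sup condition (surjectivity of the divergence): there is $w \in \sbr[1]{H_0^1(\Omega)}^d$ with $\nabla \cdot w = q_h$ and $\norm{w}_{1,\Omega} \le c\norm{q_h}_{\Omega}$. I would then take the cell part of the test function to be $v_h := \Pi_{\rm BDM} w \in V_h^{\rm BDM}$ and the facet part to be $\bar{v}_h$, the trace of the Scott--Zhang interpolant $\mathcal{I}_h w$. Since $w \in \sbr[1]{H_0^1(\Omega)}^d$, this interpolant vanishes on $\partial\Omega$ and is globally continuous, so $\bar{v}_h \in \bar{V}_h \cap C^0(\Gamma_0)$; this is precisely why the lemma can be stated over the smallest (embedded) facet space and thereby apply to all three formulations simultaneously.

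The numerator is handled by the commuting property \cref{lem:BDMprojection}\,(iv): since $q_h|_K \in P_{k-1}(K)$, we have $\int_K q_h \nabla\cdot \Pi_{\rm BDM} w \dif x = \int_K q_h \nabla\cdot w \dif x = \norm{q_h}_K^2$, and hence $b_1(q_h, v_h) = -\norm{q_h}_{\Omega}^2$ (the sign being immaterial inside the supremum). It then remains to bound the denominator by $\tnorm{\boldsymbol{v}_h}_v \le c\norm{q_h}_{\Omega}$. The volume contribution $\sum_K \norm{\nabla \Pi_{\rm BDM} w}_K^2$ is controlled by $c\norm{w}_1^2$ via the triangle inequality and \cref{lem:BDMprojection}\,(ii) with $m = l = 1$.

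The main technical work is the facet penalty $\sum_K h_K^{-1}\norm{\bar{v}_h - v_h}_{\partial K}^2$. I would split $\bar{v}_h - v_h = (\mathcal{I}_h w - w) + (w - \Pi_{\rm BDM} w)$ and apply the continuous scaled trace inequality $h_K^{-1}\norm{g}_{\partial K}^2 \le c(h_K^{-2}\norm{g}_K^2 + \envert{g}_{1,K}^2)$ termwise. For the first summand the two pieces are bounded by the Scott--Zhang estimate \cref{eq:interpolationError} together with $H^1$-stability of $\mathcal{I}_h$; for the second summand they follow from \cref{lem:BDMprojection}\,(ii) with $(m,l) = (0,1)$ and $(1,1)$. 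Summing over cells gives $\tnorm{\boldsymbol{v}_h}_v^2 \le c\norm{w}_1^2 \le c\norm{q_h}_{\Omega}^2$, and dividing the numerator by this denominator yields the claim with $\beta_1 = c^{-1/2}$.

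I expect the facet penalty bound to be the crux: every estimate must be routed through the intermediate smooth field $w$ so that the BDM estimates and the Scott--Zhang estimates, both of which compare the interpolants to $w$, can be combined, and one must verify that all constants are independent of $h$ with the penalty weight $\alpha_v$ entering only as a fixed multiplicative factor. A secondary point is to confirm that $\mathcal{I}_h w$ genuinely yields a facet function in $\bar{V}_h \cap C^0(\Gamma_0)$, i.e.\ that its traces are continuous degree-$k$ polynomials vanishing on $\partial\Omega$.
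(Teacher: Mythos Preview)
Your proposal is correct and follows essentially the same route as the paper: continuous inf-sup to produce $w$, BDM interpolant on cells paired with the trace of the Scott--Zhang interpolant on facets, and the commuting property \cref{lem:BDMprojection}\,(iv) for the numerator. The only notable difference is in the facet-penalty bound: the paper observes that $\Pi_{\rm BDM}w - \mathcal{I}_h w$ is itself a polynomial on each cell and applies the \emph{discrete} trace inequality \cref{eq:trace_inequality} directly before splitting via $w$ in the volume, which is slightly more economical since it needs only the $L^2$ Scott--Zhang estimate \cref{eq:interpolationError} and not the $H^1$-stability of $\mathcal{I}_h$ that your split-on-the-boundary approach invokes.
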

\begin{proof}
  We first consider a bound for $\tnorm{ \cdot }_{v}$. From
  \cref{item:bdm_inverse_inequality} of \cref{lem:BDMprojection} and the
  triangle inequality,
  \begin{equation}
    \label{eq:elementbdmbound}
    \norm{\nabla(\Pi_{\rm BDM}v)}_{K}
    \le
    \norm{\nabla v - \nabla(\Pi_{\rm BDM} v)}_{K} +
    \norm{\nabla v}_{K}
    \le
    c \norm{v}_{1, K},
  \end{equation}
  and
  \begin{equation}
    \label{eq:trace_bound}
    \sum_{K} h^{-1} \norm{\Pi_{\rm BDM} v - \mathcal{I}_{v} v }_{0, \partial K}^{2}
    \le
   \sum_{K} c h^{-2} \norm{\Pi_{\rm BDM} v - \mathcal{I}_{v} v }_{0, K}^{2}
   \le
    c \norm{v}_{1, K}^{2}
  \end{equation}
  for all $v \in \sbr{H^{1}(\Omega)}^{d}$, where in
  \cref{eq:trace_bound} the first inequality is due to the trace
  inequality \cref{eq:trace_inequality}, and the second is due to
  \cref{item:bdm_inverse_inequality}
  of \cref{lem:BDMprojection} and the interpolation estimate in
  \cref{eq:interpolationError}. Combining
  \cref{eq:elementbdmbound,eq:trace_bound},
  \begin{equation}
    \label{eq:tnormbdmfinalbound}
    \tnorm{(\Pi_{\rm BDM}v, \mathcal{I}_h v)}_{v}^{2}
      \le c (1 + \alpha_v) \norm{v}^2_{1, \Omega}
    \quad \forall v \in \sbr[1]{H^{1}(\Omega)}^{d}.
  \end{equation}

  For all $q \in L_{0}^{2}(\Omega)$ there exists a $v_{q} \in
  \sbr{H^{1}_{0}(\Omega)}^{d}$ such that
  \begin{equation}
    \label{eq:stab_b}
    q = \nabla \cdot v_q
    \quad \text{and} \quad
    \beta_c \norm{v_{q}}_{1, \Omega} \le \norm{q}_{\Omega},
  \end{equation}
  where $\beta_{c} > 0$ is a constant depending only on $\Omega$ (see,
  e.g.~\cite[Theorem~6.5]{Pietro:book}). For $q_{h} \in Q_{h}$, we
  denote $v_{q_h} \in \sbr{H_0^1(\Omega)}^d$ such that $\nabla \cdot
  v_{q_h} = q_h$. It then follows that
  \begin{equation}
    \label{eq:normqhinBDMbh}
    \norm{q_h}^2_{\Omega}
    = \int_{\Omega}q_h\nabla\cdot v_{q_h} \dif x
    = \int_{\Omega}q_h\nabla\cdot \Pi_{\rm BDM} v_{q_{h}} \dif x
    = -b_1(q_h, \Pi_{\rm BDM}v_{q_h})
  \end{equation}
  by \cref{item:bdm_div} of \cref{lem:BDMprojection} and by the
  definition of $b_{1}$ in \cref{eq:bh1bh2},
  and from \cref{eq:tnormbdmfinalbound},
  \begin{equation}
    \label{eq:tnormbdmfinalbound_q}
    \tnorm{(\Pi_{\rm BDM} v_{q_{h}}, \mathcal{I}_h v_{q_{h}})}_{v}
    \le
    c \sqrt{1 + \alpha_v} \norm{v_{q_{h}}}_{1, \Omega}
    \le
    c \sqrt{1 + \alpha_v} \beta_c^{-1} \norm{q_{h}}_{\Omega}.
  \end{equation}

  Satisfaction of \cref{eq:stab_b1} follows from
  \begin{equation}
    \label{eq:fininfsupql2}
    \sup_{ \boldsymbol{v}_h\in V_h^{\rm BDM} \times \del{\bar{V}_h \cap C^{0}(\Gamma_{0)}}}
    \frac{ -b_1(q_h, v_h) }{\tnorm{ \boldsymbol{v}_h }_v}
    \ge
    \frac{ -b_1(q_h, \Pi_{\rm BDM} v_{q_h}) }{\tnorm{ (\Pi_{\rm BDM}v_{q_h}, \mathcal{I}_h^kv_{q_h}) }_v}
    \ge
    \frac{\beta_c}{c\sqrt{1+\alpha_v}}\norm{q_h}_{\Omega},
  \end{equation}
  where $\nabla \cdot v_{q_h} = q_{h}$, and where
  \cref{eq:normqhinBDMbh,eq:tnormbdmfinalbound_q} are used for the
  second inequality. \qed
\end{proof}
The preceding proof is simpler and more general than
\cite[Lemma~4.4]{Rhebergen:2017}, which was for the case of
discontinuous facet functions, i.e.,~$\boldsymbol{v}_h \in V_h^{\rm BDM}
\times \bar{V}_{h}$.

\begin{lemma}[Stability of $b_2$]
  \label{thm:stab_b2}
  There exists a constant $\beta_{2} > 0$, independent of $h$, such that
  for all $\bar{q}_h \in \bar{Q}_h$
  \begin{equation}
    \label{eq:stab_b2}
    \beta_{2} \norm{\bar{q}_h}_{p}
    \le
    \sup_{\boldsymbol{v}_h \in V_h \times \del{\bar{V}_{h} \cap C^{0}(\Gamma_{0})}}
      \frac{ b_{2}(\bar{q}_h, v_h) }{\tnorm{ \boldsymbol{v}_h }_{v}}.
  \end{equation}
\end{lemma}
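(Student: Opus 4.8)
The plan is to construct, for a given $\bar q_h \in \bar Q_h$, an explicit velocity field $\boldsymbol{v}_h = (v_h, 0)$ that makes the supremum in \cref{eq:stab_b2} large. Since $b_2$ does not involve the facet velocity at all, I would take $\bar v_h = 0$, which lies in $\bar V_h \cap C^0(\Gamma_0)$ and is therefore admissible; this immediately removes any difficulty associated with the continuity constraint on the facet velocity, so the estimate holds uniformly for all three methods even though the supremum is over the smaller (continuous) facet space. Because $V_h$ is fully discontinuous across cells, $v_h$ may be built cell-by-cell with no inter-element constraints.

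On each cell I would first work on a fixed reference cell $\hat K$. The map $\hat v \mapsto (\hat v \cdot \hat n|_{\hat F})_{\hat F \subset \partial \hat K}$ from $[P_k(\hat K)]^d$ onto $\prod_{\hat F} P_k(\hat F)$ is surjective (this is exactly the solvability of the normal-trace degrees of freedom underlying \cref{lem:BDMprojection}), so given the pulled-back facet data $\hat{\bar q}_h := \bar q_h \circ F_K$ there exists $\hat v \in [P_k(\hat K)]^d$ with $\hat v \cdot \hat n = \hat{\bar q}_h$ on $\partial \hat K$. Choosing the minimal-norm such $\hat v$ and using equivalence of norms on the finite-dimensional space $[P_k(\hat K)]^d$ gives $\norm{\hat v}_{1,\hat K} \le c \norm{\hat{\bar q}_h}_{\partial \hat K}$, with $c$ depending only on $k$ and $d$. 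I would then transport $\hat v$ to $K$ by the scaled Piola transform $v_h = \lambda_K (\det B_K)^{-1} B_K\, \hat v \circ F_K^{-1}$, where $F_K(\hat x) = B_K \hat x + b_K$ is the affine map $\hat K \to K$ and $\lambda_K > 0$ is a scalar to be fixed; this keeps $v_h \in [P_k(K)]^d$ and, by the defining property of the Piola transform, yields $\int_{\partial K}(v_h\cdot n)\,\bar q_h \dif s = \lambda_K \norm{\hat{\bar q}_h}^2_{\partial\hat K}$.

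The remaining work is bookkeeping of the $h_K$-scalings under $F_K$, using $\abs{\det B_K}\sim h_K^d$, $\norm{B_K}\sim h_K$ and shape-regularity. Tracking $\dif s \sim h_K^{d-1}\dif\hat s$ gives $\norm{\hat{\bar q}_h}^2_{\partial\hat K}\sim h_K^{1-d}\norm{\bar q_h}^2_{\partial K}$, so the choice $\lambda_K \sim h_K^{d}$ produces $\int_{\partial K}(v_h\cdot n)\bar q_h\dif s \sim h_K\norm{\bar q_h}^2_{\partial K}$; summing over cells then gives $b_2(\bar q_h, v_h) \ge c\norm{\bar q_h}_p^2$. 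The same scaling applied to the Piola transform yields $\norm{\nabla v_h}^2_K \lesssim h_K\norm{\bar q_h}^2_{\partial K}$ and, via the reference-cell trace inequality, $h_K^{-1}\norm{v_h}^2_{\partial K}\lesssim h_K\norm{\bar q_h}^2_{\partial K}$, whence $\tnorm{(v_h,0)}^2_v \le c(1+\alpha_v)\norm{\bar q_h}^2_p$. Substituting both estimates into the Rayleigh quotient gives the claim with $\beta_2 = c(1+\alpha_v)^{-1/2}$, independent of $h$.

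I expect the main obstacle to be precisely this scaling bookkeeping: fixing the power of $h_K$ in $\lambda_K$ so that the pairing reproduces the weighted facet norm $\norm{\cdot}_p$ while the energy norm $\tnorm{\cdot}_v$ stays controlled. The surjectivity and the reference-cell norm equivalence are standard; the delicate point is that the Piola transform scales the pairing, the element gradient and the boundary term by three different powers of $h_K$, and these must conspire so that every contribution ends up carrying the single weight $h_K$ that defines $\norm{\bar q_h}^2_p$.
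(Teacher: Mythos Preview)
Your approach is correct and is essentially the same as the paper's: both observe that, since $b_2$ does not involve the facet velocity, one may take $\bar{v}_h = 0$ (which trivially lies in $\bar{V}_h \cap C^0(\Gamma_0)$) and thereby reduce to an inf-sup condition over $V_h$ alone. The paper then simply cites \cite[Lemma~3]{Rhebergen:2018b} for this reduced inf-sup, whereas you supply the underlying construction explicitly via a cell-wise Piola lifting of the facet data with the appropriate $h_K$-scaling; your scaling bookkeeping is sound and yields exactly the claimed $\beta_2 \sim (1+\alpha_v)^{-1/2}$.
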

\begin{proof}
  Note that
  \begin{equation}
    \label{eq:stab_b2_d}
    \beta_{2} \norm{\bar{q}_h}_{p}
    \le
    \sup_{v_h\in V_h}
    \frac{ b_2(\bar{q}_h, v_h) }{\tnorm{ (v_h, 0) }_{v}}
    \le
    \sup_{\boldsymbol{v}_h \in V_h \times \del{\bar{V}_{h} \cap C^{0}(\Gamma_{0})}}
    \frac{ b_2(\bar{q}_h, v_h) }{\tnorm{ \boldsymbol{v}_h }_{v}},
  \end{equation}
  where the first inequality was proven
  in~\cite[Lemma~3]{Rhebergen:2018b}. \qed
\end{proof}
\begin{lemma}[Boundedness of $b_{1}$ and $b_{2}$]
  \label{lem:boundedness_b12}
  There exists a $C_b > 0$, independent of~$h$, such that for all
  $\boldsymbol{v}_h \in V_h \times \bar{V}_h$ and for all
  $\boldsymbol{q}_h \in Q_h \times \bar{Q}_h$
  \begin{equation}
    \label{eq:boundedness_b12}
    \envert{b_{1}(q_h, v_h)} \le
    \tnorm{ \boldsymbol{v}_h }_v \tnorm{ \boldsymbol{q}_h }_p
    \quad \text{and} \quad
    \envert{b_{2}(\bar{q}_h, v_h)} \le
    C_b \tnorm{ \boldsymbol{v}_h }_v \tnorm{ \boldsymbol{q}_h }_p.
  \end{equation}
\end{lemma}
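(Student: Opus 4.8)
The plan is to bound each bilinear form separately using the Cauchy--Schwarz inequality, matching the structure of the two forms to the two pieces of the norm $\tnorm{\boldsymbol{q}_h}_p^2 = \norm{q}^2_\Omega + \norm{\bar q}^2_p$ defined in \cref{eq:def_normbarq}. The volume term $b_1$ is the easier of the two. First I would estimate, for each cell $K$,
\begin{equation}
  \envert*{\int_K q_h \nabla \cdot v_h \dif x}
  \le \norm{q_h}_K \norm{\nabla \cdot v_h}_K
  \le \norm{q_h}_K \norm{\nabla v_h}_K,
\end{equation}
using that $\norm{\nabla \cdot v_h}_K \le \norm{\nabla v_h}_K$ (the divergence is a contraction of the full gradient). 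Summing over $K$ and applying the discrete Cauchy--Schwarz inequality over the cells gives
\begin{equation}
  \envert{b_1(q_h, v_h)}
  \le \del*{\sum_K \norm{\nabla v_h}_K^2}^{1/2}
      \del*{\sum_K \norm{q_h}_K^2}^{1/2}
  \le \tnorm{\boldsymbol{v}_h}_v \norm{q}_\Omega
  \le \tnorm{\boldsymbol{v}_h}_v \tnorm{\boldsymbol{q}_h}_p,
\end{equation}
since $\sum_K \norm{\nabla v_h}_K^2 \le \tnorm{\boldsymbol{v}_h}_v^2$ by \cref{eq:stability_norm} and $\norm{q}_\Omega \le \tnorm{\boldsymbol{q}_h}_p$ by \cref{eq:def_normbarq}. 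This yields the constant $1$ claimed for the first estimate.

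For $b_2$ the key is to engineer the $h_K$ weights so that the facet norm $\norm{\bar q}_p$ appears. The boundary integrand $v_h \cdot n \, \bar q_h$ only sees $v_h$ through its trace on $\partial K$, which is not directly controlled by $\tnorm{\boldsymbol{v}_h}_v$; the standard device is to insert $\bar v_h$ to exploit that $\tnorm{\cdot}_v$ penalizes $v_h - \bar v_h$ on $\partial K$. I would write $v_h \cdot n = (v_h - \bar v_h) \cdot n + \bar v_h \cdot n$, and observe that the $\bar v_h$ contribution can be handled because $\bar v_h$ is single-valued on each facet while $\bar q_h$ is tested against it; depending on the precise argument one either absorbs it or notes it integrates consistently. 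The essential piece is the jump term: on each $\partial K$,
\begin{equation}
  \envert*{\int_{\partial K} (v_h - \bar v_h)\cdot n \,\bar q_h \dif s}
  \le \norm{v_h - \bar v_h}_{\partial K} \norm{\bar q_h}_{\partial K}
  = \del*{\frac{h_K}{\alpha_v}}^{1/2} \frac{\alpha_v^{1/2}}{h_K^{1/2}} \norm{v_h - \bar v_h}_{\partial K}
      \, h_K^{1/2} \norm{\bar q_h}_{\partial K} \alpha_v^{-1/2} \cdot \alpha_v^{1/2}.
\end{equation}
Grouping the factors so that $\tfrac{\alpha_v}{h_K}\norm{v_h-\bar v_h}_{\partial K}^2$ matches \cref{eq:stability_norm} and $h_K\norm{\bar q_h}_{\partial K}^2$ matches \cref{eq:def_normbarq}, then summing over $K$ and applying Cauchy--Schwarz over the cells produces $\envert{b_2(\bar q_h, v_h)} \le C_b \tnorm{\boldsymbol{v}_h}_v \norm{\bar q_h}_p \le C_b \tnorm{\boldsymbol{v}_h}_v \tnorm{\boldsymbol{q}_h}_p$, with $C_b$ carrying the residual powers of $\alpha_v$.

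The main obstacle I anticipate is the $\bar v_h \cdot n \, \bar q_h$ term arising from the splitting $v_h\cdot n = (v_h-\bar v_h)\cdot n + \bar v_h\cdot n$, since $\tnorm{\cdot}_v$ contains no term that directly bounds $\bar v_h$ on its own. The cleanest resolution is to avoid the splitting entirely and instead use a discrete trace inequality \cref{eq:trace_inequality} to bound $\norm{v_h}_{\partial K}$ directly by $h_K^{-1/2}\norm{v_h}_K$, but that controls $v_h$ in the $L^2(K)$ norm rather than the gradient seminorm appearing in $\tnorm{\cdot}_v$; bridging this gap requires care, and may force a mean-value or Poincaré-type adjustment, or appeal to the fact that the estimate need only hold on the finite element space where all norms are equivalent up to $\alpha_v$-dependent constants. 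I expect the intended proof simply invokes Cauchy--Schwarz together with the trace inequality and the norm equivalence \cref{eq:equivalentNorms_v_vprime} to absorb these technicalities into $C_b$.
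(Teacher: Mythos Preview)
Your treatment of $b_1$ is correct and yields the constant~$1$ as stated.

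For $b_2$, the splitting $v_h\cdot n = (v_h-\bar v_h)\cdot n + \bar v_h\cdot n$ is exactly the right move, and the jump piece is bounded precisely as you indicate, producing $C_b = \alpha_v^{-1/2}$. The gap in your argument is that you flag the $\bar v_h\cdot n\,\bar q_h$ contribution as the ``main obstacle'' and then reach for a trace-inequality workaround, when in fact this term vanishes identically. Both $\bar v_h$ and $\bar q_h$ are single-valued on each facet $F$, and on an interior facet the outward normals from the two adjacent cells satisfy $n^+ = -n^-$; hence
\[
  \sum_{K\in\mathcal T}\int_{\partial K}\bar v_h\cdot n\,\bar q_h\,\dif s
  = \sum_{F\ \text{interior}}\int_F \bar v_h\cdot(n^+ + n^-)\,\bar q_h\,\dif s
  + \sum_{F\subset\partial\Omega}\int_F \bar v_h\cdot n\,\bar q_h\,\dif s
  = 0,
\]
the boundary sum vanishing because $\bar v_h = 0$ on $\partial\Omega$. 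This is what you half-gestured at with ``integrates consistently'', but it deserves to be stated rather than hedged. Once this is in hand, the bound for $b_2$ is a two-line Cauchy--Schwarz computation and no norm equivalence or Poincar\'e argument is needed.

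The paper does not give its own proof but cites \cite[Lemma~4.8]{Rhebergen:2017}; the argument there is exactly the one above.
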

\begin{proof}
  The proof is identical to that of \cite[Lemma 4.8]{Rhebergen:2017}. \qed
\end{proof}

\Cref{lem:boundedness_b12} holds trivially for the EDG--HDG and EDG
formulations as the velocity and pressure fields in both cases are
subspaces of $V_h \times \bar{V}_h$ and $Q_h \times \bar{Q}_h$,
respectively.

The following is a reduced version of~\cite[Theorem~3.1]{Howell:2011}
and  will be used to prove stability of the combined pressure coupling
term.
\begin{theorem}
  \label{thm:howellWalkington}
  Let $U$, $P_1$, and $P_2$ be reflexive Banach spaces, and let $b_1:
  P_1 \times U \to \mathbb{R}$, and $b_2 : P_2 \times U \to \mathbb{R}$
  be bilinear and bounded. Let
  \begin{equation}
    Z_{b_2} := \cbr{ v \in U : b_2(p_i, v) = 0 \quad \forall p_2 \in P_2} \subset U,
    \label{eq:Zb_space}
  \end{equation}
  then the following are equivalent:
  \begin{enumerate}
  \item There exists $c > 0$ such that
    \begin{equation*}
      \label{eq:combined_inf_sup}
      \sup_{v\in U} \frac{b_1(p_1, v) + b_2(p_2, v)}{\norm{v}_U} \ge c\del{\norm{p_1}_{P_1} + \norm{p_2}_{P_2}}
      \quad (p_1, p_2) \in P_1 \times P_2.
    \end{equation*}
  \item There exists $c > 0$ such that
    \begin{equation*}
      \label{eq:proofOverZb2U}
      \sup_{v \in Z_{b_2}} \frac{b_1(p_1, v)}{\norm{v}_U} \ge c \norm{p_1}_{P_1}, \ p_1 \in P_1
      \ \mbox{and} \
      \sup_{v \in U} \frac{b_2(p_2, v)}{\norm{v}_U} \ge c \norm{p_2}_{P_2}, \ p_2\in P_2.
    \end{equation*}
  \end{enumerate}
\end{theorem}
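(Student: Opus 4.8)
The plan is to prove the abstract equivalence by establishing both implications between the two conditions. Since the statement is said to be a reduced version of a theorem by Howell and Walkington, the natural approach is a functional-analytic argument using the closed range theorem and duality in reflexive Banach spaces. First I would set up the operators: associated with $b_1$ and $b_2$ are bounded linear maps $B_1 : P_1 \to U'$ and $B_2 : P_2 \to U'$ (and their adjoints $B_i' : U \to P_i'$), so that the combined form $b_1(p_1,v) + b_2(p_2,v)$ corresponds to the operator $(p_1,p_2) \mapsto B_1 p_1 + B_2 p_2$ from $P_1 \times P_2 \to U'$. Condition~1 is precisely the statement that this combined operator is bounded below, which by standard inf-sup theory is equivalent to its being injective with closed range.

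The forward direction, that Condition~1 implies Condition~2, is the easier half. The second inequality in Condition~2 (inf-sup for $b_2$ over all of $U$) follows immediately by restricting the combined inf-sup to pairs $(0, p_2)$. For the first inequality, I would take $p_1 \in P_1$ and use Condition~1 to find $v \in U$ realizing the combined lower bound; the main task is to correct $v$ so that it lies in the kernel $Z_{b_2}$. Using the inf-sup stability of $b_2$, the adjoint $B_2'$ has closed range and one can solve for a correction that projects $v$ into $Z_{b_2}$ while controlling its norm, showing the restricted inf-sup for $b_1$ over $Z_{b_2}$ survives.

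The reverse direction, that Condition~2 implies Condition~1, is where I expect the main obstacle to lie. Here I would use the decomposition $U = Z_{b_2} \oplus W$ (or work with the quotient $U / Z_{b_2}$), where $W$ is a suitable complement on which $B_2'$ is an isomorphism onto $P_2'$. Given $(p_1, p_2)$, I would build a test function $v$ as a sum of two pieces: one piece in $Z_{b_2}$ chosen via the first inequality in Condition~2 to control $\norm{p_1}_{P_1}$, and one piece in the complement chosen via the second inequality to control $\norm{p_2}_{P_2}$. The delicate point is that the $b_1$ form is nonzero on the complement, so the contribution $b_1(p_1, \cdot)$ on the $W$-piece pollutes the estimate; the remedy is to scale the $W$-piece by a small parameter and absorb the cross term, relying on the boundedness of $b_1$ to bound the cross contribution, then choose the parameter to recover a clean lower bound of the form $c(\norm{p_1}_{P_1} + \norm{p_2}_{P_2})$.

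Since the statement is quoted as a reduction of an already-published result, the cleanest route in practice is to verify that the hypotheses here (reflexivity of all three spaces, boundedness of both bilinear forms) match those of \cite[Theorem~3.1]{Howell:2011} and to cite it directly, rather than reproducing the closed-range argument in full. If a self-contained proof is preferred, the scaling-and-absorption step in the reverse direction is the part that requires care and would constitute the core of the argument.
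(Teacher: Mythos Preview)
The paper does not prove this theorem at all: it is stated with the prefatory remark that it is ``a reduced version of~\cite[Theorem~3.1]{Howell:2011}'' and is then used as a black box in the proof of \cref{lem:stab_bh}. You correctly anticipate this at the end of your proposal, so in that sense your approach and the paper's coincide.

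Your self-contained sketch goes well beyond what the paper does, and the framework you set up (the operators $B_i : P_i \to U'$, their adjoints, and the closed range theorem in reflexive spaces) is the right one. A brief comment on the execution: in both directions the cleanest route is the one you mention first --- the surjectivity characterisation of the adjoint $(B_1', B_2') : U \to P_1' \times P_2'$ --- rather than the explicit test-function construction you describe afterwards. For $(1)\Rightarrow(2)$, surjectivity of $(B_1', B_2')$ lets you, for any $f \in P_1'$, pick $u \in U$ with $B_1' u = f$ and $B_2' u = 0$; the latter puts $u \in Z_{b_2}$ directly, so no ``correction'' step is needed. For $(2)\Rightarrow(1)$, surjectivity of $B_2'$ and of $B_1'|_{Z_{b_2}}$ composes in the obvious way (lift $g$ first, then lift $f - B_1' u_2$ inside $Z_{b_2}$), again avoiding the scaling-and-absorption argument you outline. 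Your scaling argument does also work, but it is the less economical of the two options you yourself identify.
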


\begin{lemma}[Stability of $b_h$]
  \label{lem:stab_bh}
  There exists constant $\beta_p > 0$, independent of $h$, such that for
  all~$\boldsymbol{q}_h \in Q_h \times \bar{Q}_{h}$
  \begin{equation}
    \label{eq:stab_bh}
    \beta_p \tnorm{\boldsymbol{q}_h}_{p} \le \sup_{\boldsymbol{v}_h \in V_{h}
      \times (\bar{V}_{h} \cap C^{0}(\Gamma_{0}))}
    \frac{ b_h(\boldsymbol{q}_h, v_h) }{\tnorm{ \boldsymbol{v}_h }_{v}}.
  \end{equation}
\end{lemma}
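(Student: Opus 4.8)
The plan is to read off \cref{eq:stab_bh} from \cref{thm:howellWalkington}, applied with the velocity space $U = V_{h} \times (\bar{V}_{h}\cap C^{0}(\Gamma_{0}))$ normed by $\tnorm{\cdot}_{v}$, with $P_{1} = Q_{h}$ normed by $\norm{\cdot}_{\Omega}$ and $P_{2} = \bar{Q}_{h}$ normed by $\norm{\cdot}_{p}$, and with the two forms $b_{1}, b_{2}$ of \cref{eq:bh1bh2}. All three spaces are finite dimensional, hence reflexive, and the boundedness hypotheses on $b_{1}$ and $b_{2}$ are furnished by \cref{lem:boundedness_b12} (since $b_{1}$ depends only on $q_{h}$ and $b_{2}$ only on $\bar{q}_{h}$, setting the complementary pressure component to zero yields $\envert{b_{1}(q_{h},v_{h})} \le \tnorm{\boldsymbol{v}_{h}}_{v}\norm{q_{h}}_{\Omega}$ and $\envert{b_{2}(\bar{q}_{h},v_{h})} \le C_{b}\tnorm{\boldsymbol{v}_{h}}_{v}\norm{\bar{q}_{h}}_{p}$). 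It therefore suffices to verify statement~(2) of the theorem; statement~(1) then supplies a $c>0$ with $\sup_{\boldsymbol{v}_{h}\in U} b_{h}(\boldsymbol{q}_{h},v_{h})/\tnorm{\boldsymbol{v}_{h}}_{v} \ge c(\norm{q_{h}}_{\Omega} + \norm{\bar{q}_{h}}_{p})$, and since $\norm{q_{h}}_{\Omega} + \norm{\bar{q}_{h}}_{p} \ge \tnorm{\boldsymbol{q}_{h}}_{p}$ by \cref{eq:def_normbarq}, this gives \cref{eq:stab_bh} with $\beta_{p} = c$.

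The second inf-sup in statement~(2), namely $\sup_{\boldsymbol{v}_{h}\in U} b_{2}(\bar{q}_{h},v_{h})/\tnorm{\boldsymbol{v}_{h}}_{v} \ge c\norm{\bar{q}_{h}}_{p}$, is precisely \cref{thm:stab_b2}. The first inf-sup must instead be verified over the kernel $Z_{b_{2}}$, and the key preliminary step is to identify this space explicitly. For $\boldsymbol{v}_{h} = (v_{h},\bar{v}_{h}) \in U$ one rewrites the cell sum as $b_{2}(\bar{q}_{h},v_{h}) = \sum_{F\in\mathcal{F}}\int_{F}\jump{v_{h}\cdot n}\,\bar{q}_{h}\dif s$; because $\bar{Q}_{h}$ restricts to the full space $P_{k}(F)$ on each facet independently while $\jump{v_{h}\cdot n}|_{F}\in P_{k}(F)$, the requirement $b_{2}(\bar{q}_{h},v_{h}) = 0$ for all $\bar{q}_{h}\in\bar{Q}_{h}$ is equivalent to $\jump{v_{h}\cdot n} = 0$ on every $F\in\mathcal{F}$. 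Thus $Z_{b_{2}}$ consists exactly of the pairs whose cell velocity is $H({\rm div})$-conforming with vanishing normal component on $\partial\Omega$; in particular its cell component lies in $V_{h}^{\rm BDM}$.

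It then remains to show $\sup_{\boldsymbol{v}_{h}\in Z_{b_{2}}} b_{1}(q_{h},v_{h})/\tnorm{\boldsymbol{v}_{h}}_{v} \ge c\norm{q_{h}}_{\Omega}$. I would obtain this by reusing the construction in the proof of \cref{thm:stab_b1}: given $q_{h}\in Q_{h}$, pick $v_{q_{h}}\in[H_{0}^{1}(\Omega)]^{d}$ with $\nabla\cdot v_{q_{h}} = q_{h}$ and $\beta_{c}\norm{v_{q_{h}}}_{1,\Omega}\le\norm{q_{h}}_{\Omega}$ as in \cref{eq:stab_b}, and take the test pair $(\Pi_{\rm BDM}v_{q_{h}}, \mathcal{I}_{h}v_{q_{h}})$. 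By property~(i) of \cref{lem:BDMprojection} this velocity is $H({\rm div})$-conforming, and by property~(v) together with $v_{q_{h}}\cdot n = 0$ on $\partial\Omega$ its normal trace vanishes on the boundary, so the pair belongs to $Z_{b_{2}}$. Estimates \cref{eq:normqhinBDMbh,eq:tnormbdmfinalbound_q} then apply verbatim and deliver the bound with a constant proportional to $\beta_{c}/\sqrt{1+\alpha_{v}}$.

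The only genuinely delicate point is this kernel identification together with the observation that the $b_{1}$ inf-sup is realized \emph{inside} $Z_{b_{2}}$ rather than merely over all of $U$. This is exactly what the formulation of \cref{thm:stab_b1} over the BDM space anticipates: the divergence-balancing field is constructed by BDM interpolation of an $H_{0}^{1}$ function, so it is automatically normal-continuous with zero boundary normal trace and hence lies in the kernel, and no separate argument is required. Everything else is a direct invocation of the already-established boundedness and stability lemmas.
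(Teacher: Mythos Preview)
Your proposal is correct and follows essentially the same route as the paper: apply \cref{thm:howellWalkington} with $U = V_{h}\times(\bar{V}_{h}\cap C^{0}(\Gamma_{0}))$, $P_{1}=Q_{h}$, $P_{2}=\bar{Q}_{h}$, identify $Z_{b_{2}}$ with the $H({\rm div})$-conforming subspace, and invoke \cref{thm:stab_b1,thm:stab_b2} for the two inf-sup conditions. You are in fact somewhat more careful than the paper, which simply declares $Z_{b_{2}} = V_{h}^{\rm BDM}\times(\bar{V}_{h}\cap C^{0}(\Gamma_{0}))$ without commenting on the vanishing boundary normal trace; your verification that the test pair $(\Pi_{\rm BDM}v_{q_{h}},\mathcal{I}_{h}v_{q_{h}})$ actually lands in the kernel fills that small gap.
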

\begin{proof}
  Let $b_{1}(\cdot, \cdot)$ and $b_2(\cdot, \cdot)$ be defined as in
  \cref{eq:bh1bh2}, and let $U := V_{h} \times (\bar{V}_{h} \cap
  C^{0}(\Gamma_{0}))$, $P_1 := Q_h$, $P_{2} := \bar{Q}_{h}$ and $Z_{b_2}
  := V_h^{{\rm BDM}} \times (\bar{V}_h \cap C^{0}(\Gamma_{0}))$. This
  definition of $Z_{b_2}$ satisfies \cref{eq:Zb_space} by virtue of
  continuity of the normal component of functions in $V_h^{{\rm BDM}}$
  across facets. The conditions in \cref{eq:proofOverZb2U} of
  \cref{thm:howellWalkington} are satisfied by
  \cref{thm:stab_b1,thm:stab_b2}. The result follows by equivalence of
  \cref{eq:proofOverZb2U,eq:combined_inf_sup} in
  \cref{thm:howellWalkington}. \qed
\end{proof}
\Cref{lem:stab_bh} is posed for the EDG--HDG case, but holds trivially
for the HDG case with a larger facet velocity space, $\boldsymbol{v}_{h}
\in V_{h} \times \bar{V}_{h} \supset V_h \times (\bar{V}_h \cap
C^{0}(\Gamma_{0}))$, and for the EDG method with a smaller facet
pressure space, $\boldsymbol{q}_{h} \in Q_{h} \times (\bar{Q}_{h} \cap
C^{0}(\Gamma_{0})) \subset Q_{h} \times \bar{Q}_{h}$.

An immediate consequence of the stability of $a_h$ (\cref{lem:stab_ah})
and the stability of $b_h$ (\cref{lem:stab_bh}) is that the discrete
problem in \cref{eq:discrete_problem} is well-posed, see,
e.g.~\cite[Theorem~2.4]{Gatica:book}.

\section{Error estimates and pressure robustness}
\label{sec:error_estimates}

Convergence is an immediate consequence of the stability and
boundedness results. Let
$(u, p) \in \sbr{H^{k + 1}(\Omega)}^{d} \times H^{k}(\Omega)$
$k \ge 1$ solve the Stokes problem \cref{eq:stokes}, and let
$\boldsymbol{u} = (u, u)$ and $\boldsymbol{p} = (p, p)$. If
$(\boldsymbol{u}_h, \boldsymbol{p}_h) \in X_h$ solves the finite
element problem in \cref{eq:discrete_problem}, then there exists a
constant $c > 0$, independent of $h$, such that
\begin{equation}
  \label{eq:error_tnorm}
  \nu^{1/2} \tnorm{\boldsymbol{u} - \boldsymbol{u}_h}_v
  + \nu^{-1/2} \tnorm{\boldsymbol{p} - \boldsymbol{p}_h}_p
  \le
  c\del{h^k \nu^{1/2} \norm{u}_{k + 1, \Omega} + h^{k} \nu^{-1/2} \norm{p}_{k, \Omega}}.
\end{equation}
A proof of this estimate is a simple extension of the proof given for
the HDG discretization of the Stokes problem
in~\cite[Section~5]{Rhebergen:2017}.

The error estimate in \cref{eq:error_tnorm} involves norms of the
velocity and pressure fields, and concerningly the norm of the exact
pressure scaled by~$\nu^{-1/2}$. For the HDG and the EDG--HDG cases, but
not for the EDG case, an improved estimate for the velocity field can be
found that does not depend on the pressure. The improved estimate relies
on the velocity field being pointwise divergence-free and $H({\rm
div})$-conforming (the latter condition not being met by the EDG
method).
\begin{theorem}[Pressure robust error estimate]
  \label{thm:pressure_robust}
  Let $u \in \sbr{H^{k + 1}(\Omega)}^d$ be the velocity solution of
  the Stokes problem \cref{eq:stokes} with $k \ge 1$, let
  $\boldsymbol{u} = (u, u)$, and let $\boldsymbol{u}_h \in X_h^{v}$ be
  the velocity solution of the finite element problem
  \cref{eq:discrete_problem} for the HDG or EDG--HDG
  formulations. There exists a constant $C > 0$, independent of $h$,
  such that
  \begin{equation}
    \label{eq:pressure_robust}
    \tnorm{\boldsymbol{u} - \boldsymbol{u}_h}_v \le C h^k \norm{u}_{k+1, \Omega}.
  \end{equation}
\end{theorem}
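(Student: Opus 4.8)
The plan is to exploit the pressure-robustness mechanism: for the HDG and EDG--HDG methods, the discrete velocity is pointwise divergence-free and $H(\mathrm{div})$-conforming, and the key is that the pressure coupling term $b_h$ vanishes when tested against such velocities. First I would introduce a suitable divergence-free interpolant of the exact velocity, namely $\boldsymbol{\Pi u} := (\Pi_{\mathrm{BDM}} u, \bar{u}_h)$, where $\Pi_{\mathrm{BDM}}$ is the operator of \cref{lem:BDMprojection} and $\bar{u}_h$ is a compatible facet interpolant (a continuous facet interpolant in the EDG--HDG case). By \cref{item:bdm_div} of \cref{lem:BDMprojection}, $\nabla\cdot\Pi_{\mathrm{BDM}}u = 0$ pointwise since $\nabla\cdot u = 0$, and property (i) gives continuity of the normal component across facets, so $\Pi_{\mathrm{BDM}}u \in V_h^{\mathrm{BDM}}$. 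I would then split the error via the triangle inequality into an interpolation part $\tnorm{\boldsymbol{u} - \boldsymbol{\Pi u}}_v$ and a discrete part $\tnorm{\boldsymbol{\Pi u} - \boldsymbol{u}_h}_v =: \tnorm{\boldsymbol{e}_h}_v$.

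The interpolation part is controlled by the approximation estimates in \cref{lem:BDMprojection}\ref{item:bdm_inverse_inequality} together with the trace inequality \cref{eq:trace_inequality}, yielding a bound of order $h^k \norm{u}_{k+1,\Omega}$ directly from the definition of $\tnorm{\cdot}_v$ in \cref{eq:stability_norm}. For the discrete part, I would use the stability of $a_h$ (\cref{lem:stab_ah}): since $\boldsymbol{e}_h \in V_h \times \bar{V}_h$, we have
\begin{equation}
  \nu\beta_v\tnorm{\boldsymbol{e}_h}_v^2 \le a_h(\boldsymbol{e}_h, \boldsymbol{e}_h)
  = a_h(\boldsymbol{\Pi u} - \boldsymbol{u}, \boldsymbol{e}_h) + a_h(\boldsymbol{u} - \boldsymbol{u}_h, \boldsymbol{e}_h).
\end{equation}
The first term is bounded by boundedness of $a_h$ (\cref{lem:bound_ah}) and the interpolation estimate. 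For the second term I would invoke consistency (\cref{lem:consistency}): testing the exact and discrete problems with $(\boldsymbol{e}_h, \boldsymbol{0})$ and subtracting gives
\begin{equation}
  a_h(\boldsymbol{u} - \boldsymbol{u}_h, \boldsymbol{e}_h) = -\, b_h(\boldsymbol{p} - \boldsymbol{p}_h, e_h).
\end{equation}

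The crucial point is that this pressure term vanishes. The velocity $e_h$ lies in $V_h^{\mathrm{BDM}}$: it is the difference of $\Pi_{\mathrm{BDM}}u$ and the $H(\mathrm{div})$-conforming, divergence-free discrete velocity $u_h$, so $\nabla\cdot e_h = 0$ pointwise on each cell and $e_h$ has continuous normal component across facets. Consequently $b_1(\,\cdot\,, e_h) = 0$ because its integrand $\nabla\cdot e_h$ vanishes, and $b_2(\,\cdot\,, e_h)$ telescopes to zero across interior facets by normal continuity while vanishing on $\partial\Omega$ through the boundary condition built into $\bar{V}_h$; hence $b_h(\boldsymbol{p} - \boldsymbol{p}_h, e_h) = 0$. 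This is precisely the step where $H(\mathrm{div})$-conformity is essential and where the EDG method fails, since its facet velocity does not yield an $H(\mathrm{div})$-conforming $u_h$. Combining, $\nu\beta_v\tnorm{\boldsymbol{e}_h}_v^2 \le C_a\nu\tnorm{\boldsymbol{\Pi u} - \boldsymbol{u}}_{v'}\tnorm{\boldsymbol{e}_h}_v$, and dividing through gives $\tnorm{\boldsymbol{e}_h}_v \le C h^k\norm{u}_{k+1,\Omega}$ after using norm equivalence \cref{eq:equivalentNorms_v_vprime} and the interpolation bounds. The main obstacle is verifying the vanishing of $b_h(\boldsymbol{p} - \boldsymbol{p}_h, e_h)$ rigorously, which requires care in tracking the facet integrals and using that the exact pressure pair is single-valued $(p,p)$ while confirming that normal continuity of $e_h$ makes the facet contributions cancel; everything else is routine interpolation and application of the already-established stability and boundedness lemmas.
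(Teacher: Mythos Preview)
Your proposal is correct and follows essentially the same approach as the paper: split the error via a discretely divergence-free interpolant, use coercivity and boundedness of $a_h$ on the discrete part, and eliminate the pressure contribution by exploiting that the test function $e_h$ is pointwise divergence-free and $H(\mathrm{div})$-conforming for the HDG and EDG--HDG cases. The paper phrases the argument as an infimum over all $\boldsymbol{v}_h$ satisfying $b_h(\boldsymbol{q}_h,v_h)=0$ and only afterwards selects $\boldsymbol{v}_h=(\Pi_{\mathrm{BDM}}u,\Pi_{L^2(\Gamma_0)}u)$, whereas you fix the BDM interpolant from the start, but the mechanism and the ingredients (\cref{lem:stab_ah}, \cref{lem:bound_ah}, \cref{lem:consistency}, \cref{lem:BDMprojection}) are the same; one small remark is that the boundary contribution in $b_2(\cdot,e_h)$ vanishes because $e_h\cdot n=0$ on $\partial\Omega$ (via property~(v) of \cref{lem:BDMprojection} and the discrete equations), not because of the Dirichlet condition on $\bar V_h$.
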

\begin{proof}
  Consider
  $\boldsymbol{w}_h := \boldsymbol{u}_h - \boldsymbol{v}_h \in
  X_{h}^{v}$, subject to
  $b_h(\boldsymbol{q}_h, w_h) = 0 \, \forall \boldsymbol{q}_h \in
  X_{h}^q$. From \cref{lem:stab_ah,lem:bound_ah} it holds that for all
  $\boldsymbol{v}_h$:
  \begin{equation}
    \label{eq:tnormwh}
    \begin{split}
      \beta_v \nu \tnorm{\boldsymbol{w}_h}_v^2
      &\le a_h(\boldsymbol{w}_h, \boldsymbol{w}_h)
      \\
      &= a_h(\boldsymbol{u} - \boldsymbol{v}_h, \boldsymbol{w}_h) + a_h(\boldsymbol{u}_h - \boldsymbol{u}, \boldsymbol{w}_h)
      \\
      &\le C_a \nu \tnorm{\boldsymbol{u} - \boldsymbol{v}_h}_{v'} \tnorm{\boldsymbol{w}_h}_v + a_h(\boldsymbol{u}_h, \boldsymbol{w}_h) - a_h(\boldsymbol{u}, \boldsymbol{w}_h)
      \\
      &= C_a \nu \tnorm{\boldsymbol{u} - \boldsymbol{v}_h}_{v'} \tnorm{\boldsymbol{w}_h}_v
      + \int_{\Omega} f \cdot w_h \dif x - b_h(\boldsymbol{p}_h, w_h) - a_h(\boldsymbol{u}, \boldsymbol{w}_h).
    \end{split}
  \end{equation}
  The condition $b_h(\boldsymbol{q}_h, w_h) = 0 \ \forall \
  \boldsymbol{q}_h \in X_{h}^q$ on $w_{h}$ implies that
  $b_h(\boldsymbol{p}, w_h) = b_h(\boldsymbol{p}_h, w_h) = 0$, and with
  consistency (\cref{lem:consistency}) it also follows that
  $\int_{\Omega} f \cdot w_h \dif x - a_h(\boldsymbol{u},
  \boldsymbol{w}_h) = 0$. Therefore, from \cref{eq:tnormwh} we have
  $\tnorm{\boldsymbol{w}_h}_v \le (C_a/\beta_v) \tnorm{\boldsymbol{u} -
  \boldsymbol{v}_h}_{v'}$ and
  \begin{equation}
    \label{eq:tnormwh_i}
    \tnorm{\boldsymbol{u} - \boldsymbol{u}_h}_v
    \le
    \tnorm{\boldsymbol{u} - \boldsymbol{v}_h}_v + \tnorm{\boldsymbol{w}_h}_v
    \le
    \del{1 + \frac{C_a}{\beta_v}} \tnorm{\boldsymbol{u} - \boldsymbol{v}_h}_{v'}.
  \end{equation}
  This leads to
  \begin{equation}
    \label{eq:tnormwh_f}
    \tnorm{\boldsymbol{u} - \boldsymbol{u}_h}_v
    \le c \inf_{\substack{\boldsymbol{v}_h \in X_h^{v} \\ b_{h}(\boldsymbol{q}_{h}, v_{h}) = 0
       \, \forall \boldsymbol{q} \in X_{h}^{q} }} \tnorm{\boldsymbol{u} - \boldsymbol{v}_h}_{v'}
   \le C h^k\norm{u}_{k+1,\Omega},
  \end{equation}
  where the second inequality follows from setting $\boldsymbol{v}_h =
  \Pi \boldsymbol{u} = (\Pi_{\rm BDM} u, \Pi_{L^{2}(\Gamma_{0})} u)$,
  where $\Pi_{L^{2}(\Gamma_{0})}$ is the $L^{2}$-projection into the
  facet velocity space, and the application of the BDM interpolation in
  \cref{lem:BDMprojection} and standard polynomial interpolation and
  trace inequality estimates (see \cref{sec:interpolation_estimate} for
  the interpolation estimate). \qed
\end{proof}

The refined estimate shows that (i) the velocity error does not depend
on the pressure, and as a consequence, (ii) the velocity error does not
depend on the viscosity. Formulations in which the velocity error
estimate is independent of the pressure are sometimes called
\emph{pressure robust}~\citep{John:2017,Lederer:2017}.

\begin{remark}[Lack of pressure robustness for the EDG case] For the EDG
  case, the analysis supporting \cref{thm:pressure_robust} breaks down
  due to the jump in the normal component of $w_{h}$ not being zero
  across cell facets. If $w_{h}$ is chosen to satisfy
  $b_h(\boldsymbol{q}_h, w_h) = 0 \, \forall \boldsymbol{q}_h \in X_h^q
  = Q_h \times \bar{Q}_{h} \cap C^{0}(\Gamma_{0})$, \cref{eq:tnormwh}
  holds but $b_h(\boldsymbol{p}, w_h) \ne 0$ and the step to
  \cref{eq:tnormwh_i} breaks down.  For the EDG case we have:
  \begin{equation}
    \begin{split}
      \beta_v \nu \tnorm{\boldsymbol{w}_h}_v^2
      &\le C_a \nu \tnorm{\boldsymbol{u} - \boldsymbol{v}_h}_{v'} \tnorm{\boldsymbol{w}_h}_v
      + \int_{\Omega} f \cdot w_h \dif x - b_h(\boldsymbol{p}_h, w_h) - a_h(\boldsymbol{u}, \boldsymbol{w}_h)
      \\
      &\le C_a \nu \tnorm{\boldsymbol{u} - \boldsymbol{v}_h}_{v'} \tnorm{\boldsymbol{w}_h}_v
        + \envert{b_{h}(\boldsymbol{p}, w_{h})}.
    \end{split}
  \end{equation}
  Since $b_{h}(\boldsymbol{p}, w_{h}) = b_{h}(\boldsymbol{p} - \boldsymbol{q}_{h}, w_{h})$ as $w_{h}$
  is chosen such that $b_{h}(\boldsymbol{q}_{h}, w_{h}) = 0$,
  by boundedness of $b_{h}$,
  \begin{equation}
    \tnorm{\boldsymbol{w}_h}_v \le
    \frac{C_a}{\beta} \tnorm{\boldsymbol{u} - \boldsymbol{v}_h}_{v'}
      + \frac{1}{\beta_v\nu}\tnorm{\boldsymbol{p} - \boldsymbol{p}_{h}}_p.
  \end{equation}
  which leads to
  \begin{equation}
    \tnorm{\boldsymbol{u} - \boldsymbol{u}_h}_v
    \le c \inf_{\substack{\boldsymbol{v}_h \in X_h^{v} \\b_{h}(\boldsymbol{q}_{h}, v_{h}) = 0
        \, \forall \boldsymbol{q}_{h} \in X_{h}^{q} }} \tnorm{\boldsymbol{u} - \boldsymbol{v}_h}_{v'}
    + \frac{1}{\beta_v\nu} \inf_{\boldsymbol{q}_h \in X_{h}^{q}}\tnorm{\boldsymbol{p}
    - \boldsymbol{q}_h}_p.
  \end{equation}
  This shows for the EDG method that the velocity error has a dependence
  on $1/\nu$ times the pressure error. \qed
\end{remark}

By adjoint consistency of $a_{h}$ and under appropriate regularity
assumptions, for the HDG and EDG--HDG methods the error estimate
\begin{equation}
  \norm{u - u_h} \le c h^{k + 1} \norm{u}_{k + 1, \Omega}
\end{equation}
follows straightforwardly from application of the Aubin--Nitsche trick.
The analysis is included in \cref{eq:l2normerrorestimate} for
completeness.

\section{Numerical tests}

The performance of the three formulations is considered in terms of
computed errors and solution time when solved using specially
constructed preconditioned iterative solvers. Efficiency is also
assessed in terms of the time required to compute solutions to a
specific accuracy. An element is identified by the formulation type
(HDG/EDG--HDG/EDG) and $P^k$--$P^{k-1}$, where the cell and facet
velocity and facet pressure are approximated by polynomials of degree
$k$, and the cell pressure is approximated by polynomials of
degree~$k - 1$. The penalty parameter is taken as $\alpha_v = 6k^2$ in
2D and $\alpha_v = 10k^2$ in 3D for HDG, and as $\alpha_v = 4k^2$ in
2D and $\alpha_v = 6k^2$ in 3D for EDG and EDG--HDG. The
$k$-dependency is typical of interior penalty
methods~\citep{Riviere:book} and we find the constants to be reliable
across a range of problems. All test cases have been implemented in
MFEM~\citep{mfem-library} with solver support from
PETSc~\citep{petsc-user-ref,petsc-web-page}. When applying algebraic
multigrid, we use the BoomerAMG library~\citep{Henson:2002}.

\subsection{Observed convergence rates}
\label{sec:numerical_examples}

We consider the Kovasznay~\citep{Kovasznay:1948} problem on a domain
$\Omega = \del{-0.5, 1} \times \del{-0.5, 1.5}$, for which the
analytical solution is:
\begin{subequations}
  \label{eq:kovasznay}
  \begin{align}
    u_x &= 1 - e^{\lambda x_{1}} \cos(2 \pi x_{2}), \quad
    \\
    u_y &= \frac{\lambda}{2 \pi} e^{\lambda x_{1}} \sin(2\pi x_{2}), \quad
    \\
    p   &= \frac{1}{2} \del{1 - e^{2\lambda x_{1}}} + C,
  \end{align}
\end{subequations}
where $C$ is an arbitrary constant, and where
\begin{equation}
  \lambda
  = \frac{1}{2 \nu} - \del{\frac{1}{4 \nu^{2}} + 4 \pi^2}^{1/2}.
\end{equation}
We choose $C$ such that the mean pressure on $\Omega$ is zero. Dirichlet
boundary conditions for the velocity on~$\partial \Omega$  interpolate
the analytical solution.

Observed rates of convergence for $\nu = 1/40$ are presented in
\cref{tab:kovasznay} for a series of refined meshes. Optimal rates of
convergence are observed for all test cases, including for the
remarkably simple $P^{1}$--$P^{0}$ case.

\begin{table}
  \caption{Computed velocity, pressure and velocity divergence errors in
    the $L^2$-norm, and rates of convergence, for the Kovasznay flow
    problem for the HDG, EDG and EDG--HDG methods for different orders
    of polynomial approximation.} {\small
    \begin{center}
      \begin{tabular}{cc|cc|cc|c}
        \multicolumn{7}{c}{HDG} \\
        \hline
        Degree & Cells & $\norm{u-u_h}$ & Order & $\norm{p-p_h}$ & Order & $\norm{\nabla\cdot u_h}$ \\
        \hline
        $P^1$--$P^0$
               & 672    & 8.2e-3 & 1.9 & 4.2e-2 & 1.0 & 1.2e-14 \\
               & 2,688  & 2.1e-3 & 2.0 & 2.1e-2 & 1.0 & 2.3e-14 \\
               & 10,752 & 5.3e-4 & 2.0 & 1.1e-2 & 1.0 & 4.6e-14 \\
               & 43,088 & 1.3e-4 & 2.0 & 5.4e-3 & 1.0 & 9.1e-14 \\
        \multicolumn{7}{c}{} \\
        $P^2$--$P^1$
               & 672    & 7.1e-4 & 3.0 & 2.0e-3 & 1.9 & 5.6e-14\\
               & 2,688  & 8.7e-5 & 3.0 & 5.2e-4 & 1.9 & 1.6e-13\\
               & 10,752 & 1.1e-5 & 3.0 & 1.3e-4 & 2.0 & 2.0e-13\\
               & 43,088 & 1.3e-6 & 3.0 & 3.4e-5 & 2.0 & 4.3e-13\\
        \multicolumn{7}{c}{} \\
        $P^5$--$P^4$
               & 672    & 4.3e-8  & 6.0 & 1.9e-7 & 5.0 & 9.1e-13\\
               & 2,688  & 6.7e-10 & 6.0 & 6.0e-9 & 5.0 & 1.6e-12\\
               & 10,752 & 1.7e-11 & 5.3 & 1.9e-10 & 5.0 & 3.4e-12\\
        \hline
        \multicolumn{7}{c}{} \\
        \multicolumn{7}{c}{EDG} \\
        \hline
        Degree & Cells & $\norm{u-u_h}$ & Order & $\norm{p-p_h}$ & Order & $\norm{\nabla\cdot u_h}$ \\
        \hline
        $P^1$--$P^0$
               & 672    & 3.3e-2 & 1.8 & 4.3e-2 & 1.0 & 1.4e-14\\
               & 2,688  & 8.4e-3 & 2.0 & 2.1e-2 & 1.0 & 2.9e-14\\
               & 10,752 & 2.1e-3 & 2.0 & 1.1e-2 & 1.0 & 4.8e-14\\
               & 43,088 & 5.2e-4 & 2.0 & 5.2e-3 & 1.0 & 3.0e-13\\
        \multicolumn{7}{c}{} \\
        $P^2$--$P^1$
               & 672    & 9.0e-4 & 3.0 & 2.5e-3 & 1.8 & 5.0e-14\\
               & 2,688  & 1.1e-4 & 3.0 & 7.1e-4 & 1.8 & 9.9e-14\\
               & 10,752 & 1.4e-5 & 3.0 & 1.9e-4 & 1.9 & 1.9e-13\\
               & 43,088 & 1.7e-6 & 3.0 & 4.9e-5 & 2.0 & 3.8e-13\\
        \multicolumn{7}{c}{} \\
        $P^5$--$P^4$
               & 672    & 4.2e-8  & 6.0 & 1.6e-7  & 5.0 & 4.5e-13\\
               & 2,688  & 6.5e-10 & 6.0 & 5.0e-9  & 5.0 & 9.1e-13\\
               & 10,752 & 1.0e-11 & 6.0 & 1.5e-10 & 5.0 & 1.8e-12\\
        \hline
        \multicolumn{7}{c}{} \\
        \multicolumn{7}{c}{EDG--HDG} \\
        \hline
        Degree & Cells & $\norm{u-u_h}$ & Order & $\norm{p-p_h}$ & Order & $\norm{\nabla\cdot u_h}$ \\
        \hline
        $P^1$--$P^0$
               & 672    & 3.4e-2 & 1.9 & 4.4e-2 & 1.0 & 1.3e-14\\
               & 2,688  & 8.6e-3 & 2.0 & 2.2e-2 & 1.0 & 3.0e-14\\
               & 10,752 & 2.1e-3 & 2.0 & 1.1e-2 & 1.0 & 4.4e-14\\
               & 43,088 & 5.4e-4 & 2.0 & 5.4e-3 & 1.0 & 4.7e-13\\
        \multicolumn{7}{c}{} \\
        $P^2$--$P^1$
               & 672    & 9.4e-4 & 3.1 & 2.7e-3 & 1.8 & 5.2e-14\\
               & 2,688  & 1.2e-4 & 3.0 & 7.5e-4 & 1.9 & 9.3e-14\\
               & 10,752 & 1.4e-5 & 3.0 & 2.0e-4 & 1.9 & 1.9e-13\\
               & 43,088 & 1.7e-6 & 3.0 & 5.0e-5 & 2.0 & 4.3e-13\\
        \multicolumn{7}{c}{} \\
        $P^5$--$P^4$
               & 672    & 4.3e-8  & 6.0 & 1.69e-7  & 5.0 & 4.4e-13\\
               & 2,688  & 6.7e-10 & 6.0 & 5.23e-9  & 5.0 & 9.1e-13\\
               & 10,752 & 1.1e-11 & 6.0 & 1.62e-10 & 5.0 & 1.8e-12\\
        \hline
      \end{tabular}
    \label{tab:kovasznay}
  \end{center}
}
\end{table}

\subsection{Pressure robustness}
\label{ss:testing_pressure_robustness}

Pressure robustness is demonstrated using the test case proposed
in~\cite[Section~5.1]{Lederer:2017}. On the domain $\Omega = (0, 1)
\times(0, 1)$ we consider boundary conditions and a source term such
that the exact solution is $u = \mathrm{curl} \zeta$, where $\zeta =
x_{1}^2 (x_{1} - 1)^2 x_{2}^2 (x_{2} - 1)^2$ and $p = x_{1}^5 + x_{2}^5
- 1/3$. We vary the viscosity $\nu$ and consider different orders of
polynomial approximation. It can be observed in
\cref{tab:pressure_robust} that the errors in the velocity for the HDG
and EDG--HDG methods are indeed independent of the pressure and
viscosity, as expected from \cref{thm:pressure_robust}. The lack of
pressure robustness for the EDG method is evident in
\cref{tab:pressure_robust} where it is clear (in bold) that the velocity
error increases as viscosity is decreased for the EDG method.

\begin{table}
  \caption{Computed velocity, pressure and velocity divergence errors in
    the $L^2$-norm for the pressure robustness test case for the HDG
    (H), EDG (E) and EDG--HDG (EH) methods for different orders of
    polynomial approximation. H-10 represents an HDG $P^1$--$P^0$
    discretization, etc.} {\small
    \begin{center}
      \begin{tabular}{cc|ccc|ccc}
        \multicolumn{2}{c|}{} & \multicolumn{3}{c|}{$\nu = 1$} & \multicolumn{3}{c}{$\nu = 10^{-6}$} \\
        \multicolumn{2}{c|}{} & \multicolumn{3}{c|}{} & \multicolumn{3}{c}{}\\
        \hline
        Cells & Method & $\norm{u-u_h}$ & $\norm{p-p_h}$ & $\norm{\nabla\cdot u_h}$
                              & $\norm{u-u_h}$ & $\norm{p-p_h}$ & $\norm{\nabla\cdot u_h}$ \\
        \hline
        131,072 & H-10  & 5.0e-7 & 2.4e-3 & 1.1e-15 & 5.0e-7 & 2.4e-3 & 7.6e-11 \\
        131,072 & E-10  & 9.2e-7 & 2.4e-3 & 1.0e-15 & {\bf 4.8e-3} & 2.4e-3 & 1.1e-10 \\
        131,072 & EH-10 & 1.1e-6 & 2.4e-3 & 1.1e-15 & 1.1e-6 & 2.4e-3 & 0.8e-11 \\
        \hline
        2,048 & H-21  & 4.7e-7 & 4.3e-4 & 6.0e-16 & 5.2e-7 & 3.2e-4 & 5.4e-11 \\
        2,048 & E-21  & 6.8e-7 & 5.2e-4 & 1.6e-15 & {\bf 4.6e-3} & 3.2e-4 & 6.4e-11 \\
        2,048 & EH-21 & 7.0e-7 & 5.3e-4 & 5.6e-16 & 7.4e-7 & 3.2e-4 & 6.6e-11 \\
        \hline
        128 & H-43  & 2.6e-7 & 3.9e-5 & 1.8e-15 & 2.6e-7 & 3.8e-6 & 1.0e-10 \\
        128 & E-43  & 2.7e-7 & 3.7e-5 & 1.7e-15 & {\bf 3.0e-5} & 3.8e-6 & 1.2e-10 \\
        128 & EH-43 & 2.7e-7 & 3.7e-5 & 1.7e-15 & 2.7e-7 & 3.8e-6 & 1.1e-10 \\
        \hline
      \end{tabular}
    \label{tab:pressure_robust}
  \end{center}
}
\end{table}

\subsection{Minimal regularity test}
\label{ss:testing_minimal_regularity}

We consider the Stokes problem on the L-shaped domain
$\Omega := \del{-1, 1}^2 \backslash \sbr{-1,0}\times\sbr{0,1}$ with
$\nu=1$ and $f = 0$, see, e.g.~\citep{Hansbo:2008,Verfurth:1989}. The
Dirichlet boundary data are interpolated from the exact solution,
which in polar coordinates is given by:
\begin{subequations}
  \label{eq:cornerflow}
  \begin{align}
    u_x &= r^{\lambda}\sbr{(1+\alpha)\sin(\varphi)\psi(\varphi) + \cos(\varphi)\partial_{\varphi}\psi(\varphi)}
    \\
    u_y &= r^{\lambda}\sbr{-(1+\alpha)\cos(\varphi)\psi(\varphi) + \sin(\varphi)\partial_{\varphi}\psi(\varphi)}
    \\
    p &= -r^{\lambda-1}\sbr{(1+\lambda)^2 \partial_{\varphi}\psi(\varphi)
      + \partial^3_{\varphi}\psi(\varphi)}/(1-\lambda),
  \end{align}
\end{subequations}
where
\begin{multline}
  \label{eq:defpsi}
  \psi(\varphi) = \sin((1+\lambda)\varphi)\cos(\lambda\omega)/(1+\lambda) - \cos((1+\lambda)\varphi) \\
  -\sin((1-\lambda)\varphi)\cos(\lambda\omega)/(1-\lambda) + \cos((1-\lambda)\varphi),
\end{multline}
and where $\omega = 3\pi/2$ and $\lambda \approx 0.54448373678246$. Note
that $u \notin \sbr{H^2(\Omega)}^2$ and $p \notin H^1(\Omega)$ for this
problem.

\Cref{fig:Lshapeddomain} presents the computed velocity and pressure
errors for $P^1$--$P^0$ and $P^4$--$P^3$ discretizations against the
total number of degrees-of-freedom. The solutions are observed to
converge, and the velocity and pressure errors are approximately of
order $\mathcal{O}(h)$ and $\mathcal{O}(h^{1/2})$, respectively. This is
an example where use of the very simple $P^{1}$--$P^{0}$ discretization
could be appealing.

\begin{figure}
  \centering
  \subfloat[Velocity error.]{\includegraphics[width=0.7\textwidth]{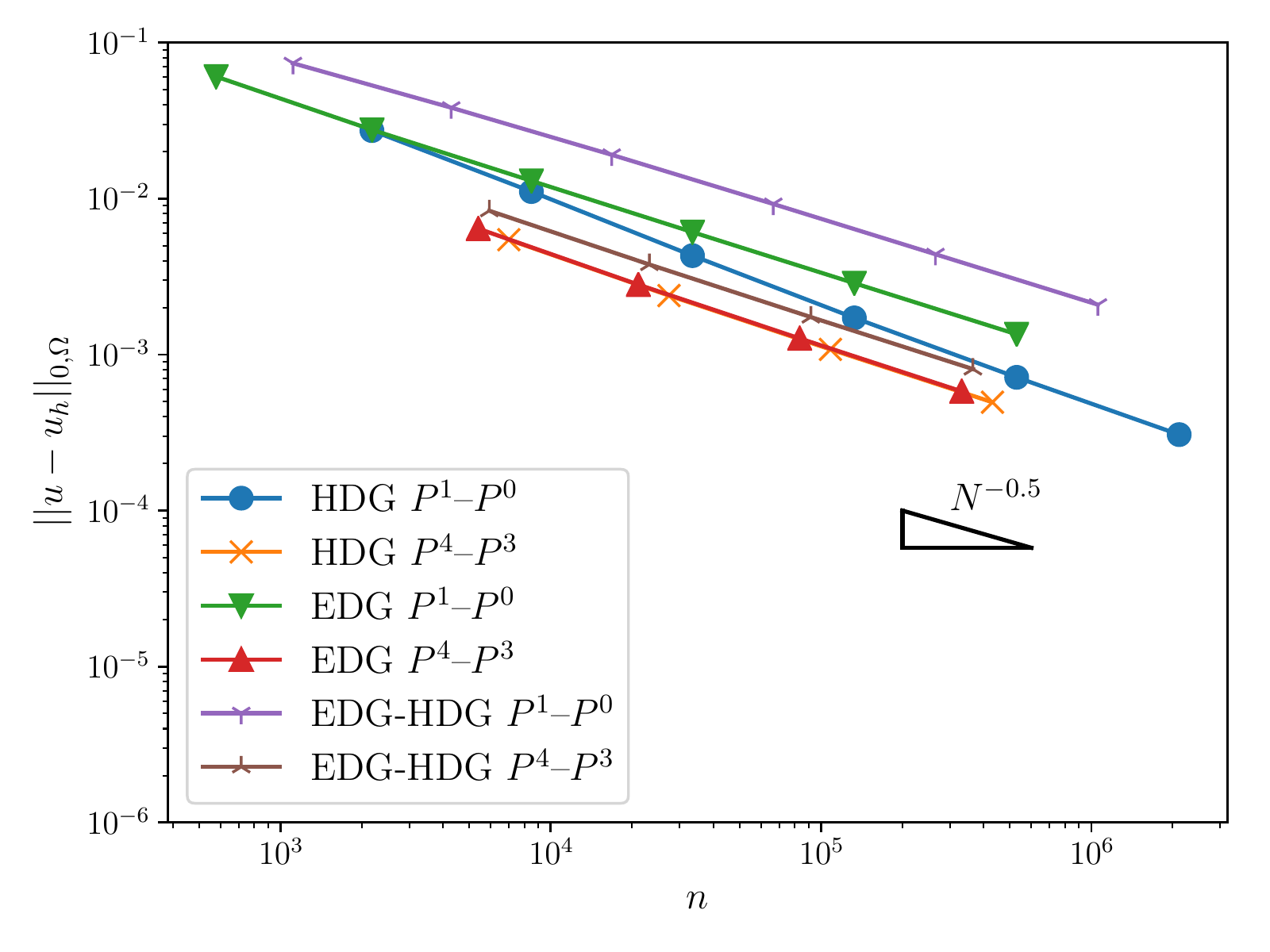}} \\
  \subfloat[Pressure error.]{\includegraphics[width=0.7\textwidth]{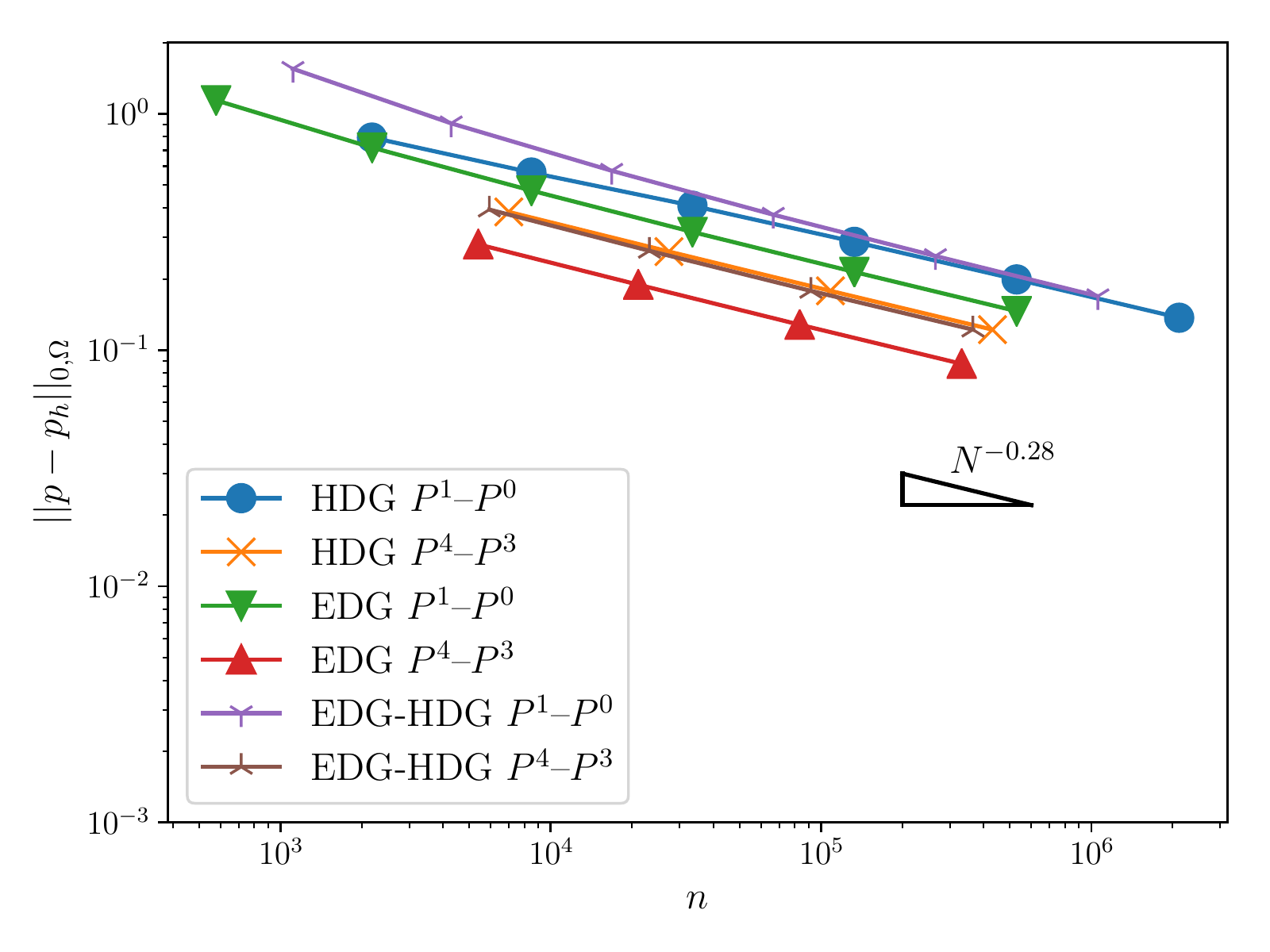}}
  \caption{Computed velocity and pressure errors in the $L^2$-norm
    versus total number of degrees-of-freedom~($n$) for the minimal
    regularity solution test case.}
  \label{fig:Lshapeddomain}
\end{figure}

\subsection{Preconditioned linear solvers}
\label{sec:preconditioning}

A motivation for considering EDG--HDG and EDG methods is efficiency when
combined with preconditioned iterative solvers, and in particular the
similarity of EDG--HDG and EDG to continuous methods for which a range
of solvers are known to perform well. In~\citep{Rhebergen:2018b} we
introduced an optimal preconditioner for the statically condensed
(cell-wise velocity eliminated locally) linear system obtained from the
HDG discretization of the Stokes problem, and the analysis holds also
for the EDG and EDG--HDG methods. The preconditioner is presented here
and we refer to \citep{Rhebergen:2018b} for the analysis.

The discrete problem for \cref{eq:discrete_problem} has the form:
\begin{equation}
  \label{eq:reorg-discsys}
  \begin{bmatrix}
    A_{uu} & \mathsf{B}^T
    \\
    \mathsf{B} & \mathsf{C}
  \end{bmatrix}
  \begin{bmatrix}
    u
    \\
    \mathsf{U}
  \end{bmatrix}
  =
  \begin{bmatrix}
    L_u
    \\
    \mathsf{L}
  \end{bmatrix},
  \qquad
  \mathsf{U} :=
  \begin{bmatrix}
    \bar{u}
    \\
    p
    \\
    \bar{p}
  \end{bmatrix} \quad
  \mathsf{L} :=
  \begin{bmatrix}
    L_{\bar{u}}
    \\
    0
    \\
    0
  \end{bmatrix},
\end{equation}
with
\begin{equation}
  \label{eq:def_sfA_sfB}
  \mathsf{B} :=
  \begin{bmatrix}
    A_{\bar{u}u}
    \\
    B_{pu}
    \\
    B_{\bar{p}u}
  \end{bmatrix},
  \quad
  \mathsf{C} :=
  \begin{bmatrix}
    A_{\bar{u}\bar{u}} & 0 & 0 \\ 0 & 0 & 0 \\ 0 & 0 & 0
  \end{bmatrix}.
\end{equation}
Here $u \in \mathbb{R}^{n_u}$ and $\bar{u}\in\mathbb{R}^{\bar{n}_u}$ are
the vectors of the discrete velocity with respect to the basis for the
cell-wise and facet velocities, respectively, and $p \in
\mathbb{R}^{n_p}$ and $\bar{p} \in \mathbb{R}^{\bar{n}_p}$ are the
vectors of the discrete pressure with respect to the basis for the
cell-wise and facet pressures, respectively. Furthermore, $A_{uu}$,
$A_{\bar{u}u}$ and $A_{\bar{u}\bar{u}}$ are the matrices obtained from
the discretization of $a_h((\cdot,0),(\cdot,0))$, $a_h((\cdot,0),
(0,\cdot))$ and $a_h((0,\cdot), (0,\cdot))$, respectively, and $B_{pu}$
and $B_{\bar{p}u}$ are the matrices obtained from the discretization of
$b_h((\cdot,0),(\cdot,0))$ and $b_h((0,\cdot),(\cdot,0))$. Noting that
$A_{uu}$ is a block diagonal matrix (one block per cell), it is possible
to efficiently eliminate $u$ from \cref{eq:reorg-discsys} using $u =
A_{uu}^{-1}\del{L_u - \mathsf{B}^T\mathsf{U}}$. This results in a
reduced system for $\mathsf{U}$ only,
\begin{equation}
  \label{eq:system_S}
  \begin{bmatrix}
    -A_{\bar{u}u}A_{uu}^{-1}A_{\bar{u}u}^T + A_{\bar{u}\bar{u}} & -A_{\bar{u}u}A_{uu}^{-1}B_{pu}^T
        & -A_{\bar{u}u}A_{uu}^{-1}B_{\bar{p}u}
    \\
    -B_{pu}A_{uu}^{-1}A_{\bar{u}u}^T & -B_{pu}A_{uu}^{-1}B_{pu}^T
        & -B_{pu}A_{uu}^{-1}B_{\bar{p}u}^T
    \\
    -B_{\bar{p}u}A_{uu}^{-1}A_{\bar{u}u}^T & -B_{\bar{p}u}A_{uu}^{-1}B_{pu}^T
        & -B_{\bar{p}u}A_{uu}^{-1}B_{\bar{p}u}^T
  \end{bmatrix}
  \begin{bmatrix}
    \bar{u}
    \\
    p
    \\
    \bar{p}
  \end{bmatrix}
  =
  \begin{bmatrix}
    L_{\bar{u}} - A_{\bar{u}u}A_{uu}^{-1}L_u
    \\
    -B_{pu}A_{uu}^{-1}L_u
    \\
    -B_{\bar{p}u}A_{uu}^{-1}L_u
  \end{bmatrix}.
\end{equation}

In \citep{Rhebergen:2018b} we introduced three optimal preconditioners
for the reduced form of the hybrid discretizations of the Stokes
problem: two block diagonal preconditioners and a block symmetric
Gauss--Seidel preconditioner. We discuss here only the block symmetric
Gauss--Seidel preconditioner. Let $\mathcal{P}_D$ and $\mathcal{P}_L$
be, respectively, the block-diagonal and the strictly lower triangular
part of the system matrix in \cref{eq:system_S}. The block symmetric
Gauss--Seidel preconditioner is then given by
\begin{equation}
  \label{eq:preconditioner}
  \mathcal{P} = (\mathcal{P}_L + \mathcal{P}_D)\mathcal{P}_D^{-1}(\mathcal{P}_L^T + \mathcal{P}_D).
\end{equation}
As discussed in \citep{Rhebergen:2018b}, algebraic multigrid can
successfully be applied to approximate the inverse of~$\mathcal{P}_D$.

Optimality of the preconditioner in \cref{eq:preconditioner} for the
HDG, EDG--HDG and EDG methods is tested for $P^1$--$P^0$ and
$P^2$--$P^1$ polynomial approximations. In all cases we use MINRES for
the outer iterations, with AMG (four multigrid V-cycles) to
approximate the inverse of each block of~$\mathcal{P}_D$. In all
cases, the solver is terminated once the relative true residual
reaches a tolerance of~$10^{-12}$.

We consider lid-driven cavity flow in the unit square
$\Omega = [-1, 1]^2$ and a cube $\Omega = [0, 1]^3$, using
unstructured simplicial meshes. Dirichlet boundary conditions are
imposed on $\partial\Omega$.  In two dimensions, $u = (1-x_1^4, 0)$ on
the boundary $x_2 = 1$ and the zero velocity vector on remaining
boundaries. In three dimensions we impose
$u = (1 - \tau_1^4, \tfrac{1}{10}(1-\tau_2^4), 0)$, with
$\tau_i = 2 x_i-1$, on the boundary $x_3 = 1$ and the zero velocity
vector on remaining boundaries. We set $\nu =
1$. \Cref{tab:preconditioning_2d} presents the number of iterations
for the two-dimensional problem and \cref{tab:preconditioning_3d}
presents the number of iterations for the three-dimensional
problem. The preconditioner in \cref{eq:preconditioner} is observed to
be optimal for all methods in both two and three dimensions -- the
iteration count is independent of the problem size, or at worst
exhibits a weak growth with increasing problem size. In all cases the
solver converges in fewer iterations for the EDG and EDG--HDG methods
compared to the HDG method. For example, on the finest grid in three
dimensions, using a $P^2$--$P^1$ discretization, HDG requires 300
iterations to converge, compared to 132 for EDG and 151 for EDG--HDG.

\begin{table}
  \caption{Iteration counts for preconditioned MINRES for the relative true residual
    to reach a tolerance of $10^{-12}$ for the lid-driven cavity problem
    in two dimensions.} {\small
    \begin{center}
      \begin{tabular}{c|cc|cc|cc}
        \multicolumn{1}{c}{} & \multicolumn{6}{c}{$P^1$--$P^0$} \\
        \hline
        & \multicolumn{2}{c|}{HDG} & \multicolumn{2}{c|}{EDG} & \multicolumn{2}{c}{EDG--HDG} \\
        Cells & DOFs & Its & DOFs & Its & DOFs & Its \\
        \hline
        176    & 1,892   & 204 & 509     & 177 & 970     & 193\\
        704    & 7,304   & 217 & 1,895   & 194 & 3,698   & 211\\
        2,816  & 28,688  & 230 & 7,307   & 200 & 14,434  & 212\\
        11,264 & 113,696 & 234 & 28,691  & 191 & 57,026  & 198\\
        45,056 & 452,672 & 236 & 113,699 & 193 & 226,690 & 197\\
        \hline
        \multicolumn{1}{c}{} & \multicolumn{6}{c}{$P^2$--$P^1$} \\
        \hline
        & \multicolumn{2}{c|}{HDG} & \multicolumn{2}{c|}{EDG} & \multicolumn{2}{c}{EDG--HDG} \\
        Cells & DOFs & Its & DOFs & Its & DOFs & Its \\
        \hline
        176    & 3,102   & 156 & 1,719   & 129 & 2,180   & 129\\
        704    & 12,012  & 166 & 6,603   & 131 & 8,406   & 131\\
        2,816  & 47,256  & 170 & 25,875  & 132 & 33,002  & 131\\
        11,264 & 187,440 & 182 & 102,435 & 130 & 130,770 & 129\\
        45,056 & 746,592 & 184 & 407,619 & 127 & 520,610 & 126\\
        \hline
      \end{tabular}
    \label{tab:preconditioning_2d}
  \end{center}
}
\end{table}

\begin{table}
  \caption{Iteration counts for preconditioned MINRES for the relative true residual
    to reach a tolerance of $10^{-12}$ for the lid-driven cavity problem
    in three dimensions.} {\small
    \begin{center}
      \begin{tabular}{c|cc|cc|cc}
        \multicolumn{1}{c}{} & \multicolumn{6}{c}{$P^1$--$P^0$} \\
        \hline
        & \multicolumn{2}{c|}{HDG} & \multicolumn{2}{c|}{EDG} & \multicolumn{2}{c}{EDG--HDG} \\
        Cells & DOFs & Its & DOFs & Its & DOFs & Its \\
        \hline
        524     & 14,540  & 258 & 1,180  & 169 & 4,520   & 190\\
        4,192   & 110,560 & 302 & 8,076  & 189 & 33,697  & 218\\
        33,536  & 861,440 & 327 & 59,988 & 189 & 260,351 & 210\\
        \hline
        \multicolumn{1}{c}{} & \multicolumn{6}{c}{$P^2$--$P^1$} \\
        \hline
        & \multicolumn{2}{c|}{HDG} & \multicolumn{2}{c|}{EDG} & \multicolumn{2}{c}{EDG--HDG} \\
        Cells & DOFs & Its & DOFs & Its & DOFs & Its \\
        \hline
        524    & 30,128    & 227 & 5,980   & 136 & 12,017  & 151 \\
        4,192  & 229,504   & 265 & 43,220  & 136 & 89,791  & 161 \\
        33,536 & 1,789,952 & 300 & 328,868 & 132 & 694,139 & 151 \\
        \hline
      \end{tabular}
    \label{tab:preconditioning_3d}
  \end{center}
}
\end{table}

\subsection{Performance comparison}
\label{sec:perform_comp}

We compare the overall performance of the HDG, EDG--HDG and EDG methods
in terms of solution time for a given level of accuracy using a problem
with $\nu = 1$ on the unit cube $\Omega = \sbr{0,1}^3$ with source and
Dirichlet boundary conditions such that the exact solution is given by
\begin{equation}
  u=\pi
  \begin{bmatrix}
    \sin(\pi x_1)\cos(\pi x_2) - \sin(\pi x_1)\cos(\pi x_3) \\
    \sin(\pi x_2)\cos(\pi x_3) - \sin(\pi x_2)\cos(\pi x_1) \\
    \sin(\pi x_3)\cos(\pi x_1) - \sin(\pi x_3)\cos(\pi x_2)
  \end{bmatrix},
  \quad
  p = \sin(\pi x)\sin(\pi y)\sin(\pi z) - 8/\pi^3.
\end{equation}
Meshes are composed of unstructured tetrahedral cells and generated
using Gmsh. We apply GMRES with restarts after 30 iterations, with the
preconditioner in \cref{sec:preconditioning} applied. The iterative
method is terminated once the relative true residual reaches~$10^{-12}$.

The performance results for $P^2$--$P^1$ and $P^3$--$P^2$
discretizations are presented in \cref{tab:relativeresults}. We
observe that the velocity error is approximately 1.2--1.6 times higher
for the EDG--HDG and EDG methods when compared to the HDG method on
the same mesh. However, the time to compute the solution using the
EDG--HDG or EDG method is substantially lower compared to the HDG
discretization.  This is due to the global linear systems for the
EDG--HDG and EDG methods being significantly smaller for a given mesh,
and the systems solving in fewer iterations. Particularly noteworthy
is the $P^2$--$P^1$ EDG simulation on the finest mesh, for which
compared to the $P^2$--$P^1$ HDG solution the error is $1.4$ times
greater in the $L_{2}$-norm but the solution time is just~$1/20$th.

\begin{table}
  \caption{Results are normalized with respect to the results of HDG on
    each mesh (in brackets). Here $n$ is the total number of degrees of
    freedom of the global system after static condensation.}
  \centering
  \begin{tabular}{cl|llll}
    \multicolumn{2}{c}{} & \multicolumn{4}{c}{$P^2$--$P^1$} \\
    \hline
    Mesh & Method & Rel. $\norm{u-u_h}$ & Rel. $n$ & Rel. its & Rel. time \\
    \hline
    1            & HDG      & 1 (2.0e-2)  & 1 (\num{30128})  & 1 (51)   & 1 (7.5 \si{\second}) \\
    (524 cells)  & EDG--HDG & 1.59        & 0.4              & 0.88     & 0.27 \\
                         & EDG      & 1.49        & 0.2              & 0.80     & 0.12 \\
    \hline
    2                   & HDG      & 1 (4.8e-3)  & 1 (\num{229504})  & 1 (55)   & 1 (79 \si{\second}) \\
    (\num{4192} cells)  & EDG--HDG & 1.58        & 0.4               & 0.76     & 0.12 \\
                         & EDG      & 1.45        & 0.2               & 0.65     & 0.08 \\
    \hline
    3                    & HDG      & 1 (1.0e-3)  & 1 (\num{1789953})  & 1 (58)   & 1 (770 \si{\second}) \\
    (\num{33536} cells)  & EDG--HDG & 1.51        & 0.4                & 0.55     & 0.09 \\
                         & EDG      & 1.38        & 0.2                & 0.52     & 0.06 \\
    \hline
    \multicolumn{2}{c}{} & \multicolumn{4}{c}{$P^3$--$P^2$} \\
    \hline
    Mesh & Method & Rel. $\norm{u-u_h}$ & Rel. $n$ & Rel. its & Rel. time \\
    \hline
    1            & HDG      & 1 (1.4e-3)  & 1 (\num{51960})  & 1 (64)   & 1 (22 \si{\second}) \\
    (524 cells)  & EDG--HDG & 1.38        & 0.5              & 0.72     & 0.17  \\
                         & EDG      & 1.30        & 0.3              & 0.67     & 0.13  \\
    \hline
    2                   & HDG      & 1 (2.6e-4)  & 1 (\num{396480})  & 1 (68)   & 1 ( \si{\second}) \\
    (\num{4192} cells)  & EDG--HDG & 1.52        & 0.5               & 0.59     & 0.13 \\
                         & EDG      & 1.20        & 0.3               & 0.57     & 0.10 \\
    \hline
    3                    & HDG      & 1 (3.2e-5) & 1 (\num{3095040})  & 1 (69)   & 1 (2105 \si{\second}) \\
    (\num{33536} cells)  & EDG--HDG & 1.23       & 0.5                & 0.51     & 0.14 \\
                         & EDG      & 1.15       & 0.3                & 0.49     & 0.11 \\
    \hline
  \end{tabular}
  \label{tab:relativeresults}
\end{table}

\section{Conclusions}
\label{sec:conclusions}

We have introduced and analyzed a new embedded--hybridized discontinuous
Galerkin (EDG--HDG) finite element method for the Stokes problem. The
analysis is unified in that it also covers the previously presented
hybridized (HDG) and embedded discontinuous Galerkin (EDG) methods for
the Stokes problem. All three methods are stable, have optimal rates of
convergence and satisfy the continuity equation pointwise. Only the HDG
and the EDG--HDG methods have velocity fields that are $H({\rm
div})$-conforming, and it is proved that a consequence of this is that
velocity error estimates are independent of the pressure. The analysis
results are supported by numerical experiments. Noteworthy is that the
analysis holds for the extremely simple piecewise linear/constant pair
for velocity/pressure field. The work was motivated by the question of
whether the EDG--HDG method could preserve the attractive features of
the HDG formulation and be more amenable to fast iterative solvers. This
has been shown to be the case, supported by analysis and numerical
examples.  Numerical examples demonstrate optimality of a carefully
constructed preconditioner, and for a given accuracy the EDG--HDG method
is considerably faster than the HDG method.
\subsubsection*{Acknowledgements}

SR gratefully acknowledges support from the Natural Sciences and
Engineering Research Council of Canada through the Discovery Grant
program (RGPIN-05606-2015) and the Discovery Accelerator Supplement
(RGPAS-478018-2015).

\appendix
\section{Interpolation estimate}
\label{sec:interpolation_estimate}

\begin{lemma}
  \label{lem:interpolationbound}
  For $v \in \sbr[1]{H^1(\Omega)}^{d}$, let $\Pi \boldsymbol{v} =
  (\Pi_{\rm BDM} v, \Pi_{L^{2}(\Gamma_{0})} v)$ where $\Pi_{\rm BDM}$ is
  the BDM interpolation operator in \cref{lem:BDMprojection}, and
  $\Pi_{L^{2}(\Gamma_{0})}$ is the $L^2$-projection into $\bar{V}_h$.
  Then
  \begin{equation}
    \label{eq:interpolationbound}
    \tnorm{\boldsymbol{v} - \Pi \boldsymbol{v}}_{v'} \le ch^k\norm{v}_{k+1,\Omega}.
  \end{equation}
\end{lemma}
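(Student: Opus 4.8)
The plan is to expand the triple norm $\tnorm{\boldsymbol{v}-\Pi\boldsymbol{v}}_{v'}$ according to its definition in \cref{eq:boundedness_norm} and bound each of its three contributions separately, writing $w := v - \Pi_{\rm BDM}v$ for the cell error and $\bar{w} := v - \Pi_{L^{2}(\Gamma_{0})}v$ for the facet error, so that $\boldsymbol{v}-\Pi\boldsymbol{v} = (w,\bar{w})$. The volume term $\sum_{K}\norm{\nabla w}_{K}^{2}$ is immediate from \cref{item:bdm_inverse_inequality} of \cref{lem:BDMprojection} with $m=1$, $l=k+1$, which gives $\norm{\nabla w}_{K}\le ch_{K}^{k}\norm{v}_{k+1,K}$; summing over cells yields the desired $h^{2k}\norm{v}_{k+1,\Omega}^{2}$ bound. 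Since $v\in \sbr[1]{H^{k+1}(\Omega)}^{d}$ with $k\ge 1$, there is enough regularity to invoke \cref{item:bdm_inverse_inequality} with $m=2$ as well.

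First I would treat the normal-derivative term $\sum_{K}(h_{K}/\alpha_{v})\norm{\partial w/\partial n}_{\partial K}^{2}$. Bounding $\norm{\partial w/\partial n}_{\partial K}\le \norm{\nabla w}_{\partial K}$ and applying a scaled multiplicative trace inequality of the form $\norm{\phi}_{\partial K}^{2}\le c h_{K}^{-1}(\norm{\phi}_{K}^{2}+h_{K}^{2}\abs{\phi}_{1,K}^{2})$ with $\phi = \nabla w$ gives $h_{K}\norm{\partial w/\partial n}_{\partial K}^{2}\le c(\norm{\nabla w}_{K}^{2}+h_{K}^{2}\abs{w}_{2,K}^{2})$. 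Using \cref{item:bdm_inverse_inequality} with $m=1$ and $m=2$ then bounds both terms by $ch_{K}^{2k}\norm{v}_{k+1,K}^{2}$, and summation closes this term.

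The main work is the penalized jump term $\sum_{K}(\alpha_{v}/h_{K})\norm{\bar{w}-w}_{\partial K}^{2}$. By the triangle inequality $\norm{\bar{w}-w}_{\partial K}\le \norm{\bar{w}}_{\partial K}+\norm{w}_{\partial K}$. The cell part $\norm{w}_{\partial K}$ is controlled by the scaled trace inequality together with \cref{item:bdm_inverse_inequality} (using $m=0$ and $m=1$), giving $h_{K}^{-1}\norm{w}_{\partial K}^{2}\le ch_{K}^{2k}\norm{v}_{k+1,K}^{2}$. The key observation for the facet part is that on each facet $F\subset\partial K$ the trace $\Pi_{\rm BDM}v|_{F}$ lies in $\sbr{P_{k}(F)}^{d}$, so it is an admissible competitor for the $L^{2}(F)$-projection; since $\Pi_{L^{2}(\Gamma_{0})}v$ is the $L^{2}$-best approximation in that space, $\norm{\bar{w}}_{F}=\norm{v-\Pi_{L^{2}(\Gamma_{0})}v}_{F}\le \norm{v-\Pi_{\rm BDM}v}_{F}=\norm{w}_{F}$. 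This reduces the facet error to the already-bounded cell error without invoking any separate facet approximation theory, and summing over facets gives $\norm{\bar{w}}_{\partial K}\le \norm{w}_{\partial K}$.

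Assembling the three estimates yields $\tnorm{\boldsymbol{v}-\Pi\boldsymbol{v}}_{v'}^{2}\le c(1+\alpha_{v}+\alpha_{v}^{-1})h^{2k}\norm{v}_{k+1,\Omega}^{2}$, from which the claim follows after taking square roots and absorbing the $\alpha_{v}$-dependence into $c$. I expect the only genuine obstacle to be the facet term: the clean way through is the best-approximation comparison above, which sidesteps the awkwardness that $\Pi_{L^{2}(\Gamma_{0})}v$ acts on the trace of a volume function, while everything else is routine scaled-trace and BDM interpolation bookkeeping.
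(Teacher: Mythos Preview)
Your proposal is correct and follows essentially the same skeleton as the paper: expand $\tnorm{\cdot}_{v'}$ into its three contributions, handle the volume gradient term directly via \cref{lem:BDMprojection}\cref{item:bdm_inverse_inequality}, handle the normal-derivative term by a scaled trace inequality plus \cref{lem:BDMprojection}\cref{item:bdm_inverse_inequality} with $m=1,2$, and split the jump term by the triangle inequality into a BDM piece and a facet-projection piece.

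The one genuine difference is in how you control the facet-projection error $\norm{v-\Pi_{L^{2}(\Gamma_{0})}v}_{\partial K}$. The paper applies a continuous trace inequality together with standard approximation properties of the $L^{2}$-projection to obtain the $h^{k}$ rate for this piece separately. You instead observe that $\Pi_{\rm BDM}v|_{F}\in\sbr{P_{k}(F)}^{d}$ and invoke the best-approximation property of the facet $L^{2}$-projection to get $\norm{v-\Pi_{L^{2}(\Gamma_{0})}v}_{F}\le\norm{v-\Pi_{\rm BDM}v}_{F}$, thereby reducing the facet error to the already-bounded BDM trace error. This is a cleaner route: it avoids invoking any independent approximation theory for the facet projection and keeps all the work in the BDM estimates. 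The paper's approach, by contrast, is slightly more modular (the two pieces are bounded independently), but your shortcut is both valid and more economical here.
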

\begin{proof}
  By definition,
  \begin{multline}
    \label{eq:boundingvprime2}
    \tnorm{\boldsymbol{v} - \Pi \boldsymbol{v}}_{v'}^2 = \sum_{K \in \mathcal{T}}
    \norm{\nabla(v - \Pi_{\rm BDM} v)}_K^2 + \sum_{K \in \mathcal{T}}
    \frac{\alpha_v}{h_K} \norm{\Pi_{\rm BDM} v - \Pi_{L^{2}(\Gamma_{0})} v}_{\partial K}^2
    \\
    + \sum_{K \in \mathcal{T}}
    \frac{h_K}{\alpha_v}\norm{\nabla(v - \Pi_{\rm BDM} v)\cdot
      n}_{\partial K}^2.
  \end{multline}
  We will bound each term on the right-hand side of
  \cref{eq:boundingvprime2} separately.

  By \cref{lem:BDMprojection} \cref{item:bdm_inverse_inequality},
  \begin{equation}
    \sum_{K\in\mathcal{T}} \norm{\nabla(v - \Pi_{\rm BDM}v)}_K^2 \le ch^{2k}\norm{v}^2_{k+1, \Omega}.
  \end{equation}

  By the triangle inequality
  \begin{multline}
    \label{eq:triangleT2}
    \sum_{K\in\mathcal{T}}\frac{\alpha_v}{h_K}\norm[0]{\Pi_{\rm BDM}v - \Pi_{L^{2}(\Gamma_{0})} v}_{\partial K}^2
    \\
    \le
    \sum_{K\in\mathcal{T}}\frac{\alpha_v}{h_K}\norm{\Pi_{\rm BDM}v - v}_{\partial K}^2
    +
    \sum_{K\in\mathcal{T}}\frac{\alpha_v}{h_K}\norm[0]{v - \Pi_{L^{2}(\Gamma_{0})} v}_{\partial K}^2.
  \end{multline}
  Applying a continuous trace inequality to the first term on the right
  hand side of \cref{eq:triangleT2}, and by \cref{lem:BDMprojection}
  \cref{item:bdm_inverse_inequality},
  \begin{equation}
    \begin{split}
      \sum_{K\in\mathcal{T}}\frac{\alpha_v}{h_K}\norm{\Pi_{\rm BDM}v - v}_{\partial K}^2
      &\le
      c \sum_{K\in\mathcal{T}}\del{h_K^{-2}\norm{\Pi_{\rm BDM}v - v}_{K}^2 + \envert{\Pi_{\rm BDM}v - v}_{1,K}^2}
      \\
      &\le
      ch^{2k}\norm{v}_{k+1, \Omega}^2.
    \end{split}
  \end{equation}
  Similarly, applying a continuous trace inequality to the second term
  on the right-hand side of \cref{eq:triangleT2}, and properties of the
  $L^2$-projection operator (e.g.~\citep{Pietro:book}),
  \begin{equation}
    \label{eq:final_term2}
    \begin{split}
      \sum_{K\in\mathcal{T}}\frac{\alpha_v}{h_K}\norm[0]{\Pi_{L^{2}(\Gamma_{0})} v - v}_{\partial K}^2
      &\le
      c \sum_{K\in\mathcal{T}}\del{h_K^{-2}\norm[0]{\Pi_{L^{2}(\Gamma_{0})} v - v}_{K}^2 + \envert[0]{\Pi_{L^{2}(\Gamma_{0})} v - v}_{1,K}^2}
      \\
      &\le
      ch^{2k}\norm{v}_{k+1,\Omega}^2.
    \end{split}
  \end{equation}
  %

  Finally, by a continuous trace inequality and \cref{lem:BDMprojection}
  \cref{item:bdm_inverse_inequality},
  \begin{equation}
    \begin{split}
    \sum_{K\in\mathcal{T}} \frac{h_K}{\alpha_v}\norm{\nabla(v - \Pi_{\rm BDM}v) \cdot n}_{\partial K}^2
    &\le \sum_{K\in\mathcal{T}}c\del{\envert{v-\Pi_{\rm BDM}v}_{1,K}^2 + h_K^2 \envert{v-\Pi_{\rm BDM}v}_{2,K}^2}
    \\
    &\le
      ch^{2k}\norm{v}^2_{k+1, \Omega}.
    \end{split}
  \end{equation}

  The result follows by combining the bounds for each term in
  \cref{eq:boundingvprime2}. \qed
\end{proof}

\section{Pressure-robust $L^2$ error estimate}
\label{eq:l2normerrorestimate}

Following the steps to prove \cref{thm:pressure_robust} and exploiting
equivalence of the $\tnorm{\cdot}_{v}$ and $\tnorm{\cdot}_{v'}$ norms on
$V_{h} \times \bar{V}_{h}$ leads to the following:
\begin{corollary}[Approximation in the $\tnorm{\cdot}_{v'}$ norm]
  \label{cor:error_prime_norm}
  Let $u \in \sbr{H^{k + 1}(\Omega)}^d$ be the velocity solution of the
  Stokes problem \cref{eq:stokes} with $k \ge 1$, let $\boldsymbol{u} =
  (u, u)$, and let $\boldsymbol{u}_h \in X_h^{v}$ be the velocity
  solution of the finite element problem \cref{eq:discrete_problem} for
  the HDG or EDG--HDG formulations. Then
  \begin{equation}
    \tnorm{\boldsymbol{u} - \boldsymbol{u}_h}_{v'} \le c h^k \norm{u}_{k+1, \Omega}.
  \end{equation}
\end{corollary}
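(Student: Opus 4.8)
The plan is to mirror the proof of \cref{thm:pressure_robust}, but to measure the error in the stronger $\tnorm{\cdot}_{v'}$ norm by splitting it into an interpolation part, which I bound directly, and a discrete part, on which the norm equivalence \cref{eq:equivalentNorms_v_vprime} can be brought to bear. Write $\boldsymbol{u} - \boldsymbol{u}_h = (\boldsymbol{u} - \boldsymbol{v}_h) - \boldsymbol{w}_h$, where $\boldsymbol{v}_h = \Pi\boldsymbol{u} = (\Pi_{\rm BDM} u, \Pi_{L^{2}(\Gamma_{0})} u) \in X_h^{v}$ is the interpolant from \cref{lem:interpolationbound} and $\boldsymbol{w}_h := \boldsymbol{u}_h - \boldsymbol{v}_h \in X_h^{v}$ is chosen, exactly as in \cref{thm:pressure_robust}, to satisfy $b_h(\boldsymbol{q}_h, w_h) = 0$ for all $\boldsymbol{q}_h \in X_h^{q}$. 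The triangle inequality in the $\tnorm{\cdot}_{v'}$ norm then gives
\begin{equation*}
  \tnorm{\boldsymbol{u} - \boldsymbol{u}_h}_{v'}
  \le \tnorm{\boldsymbol{u} - \boldsymbol{v}_h}_{v'} + \tnorm{\boldsymbol{w}_h}_{v'}.
\end{equation*}

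First I would handle the discrete part. Since $\boldsymbol{w}_h \in V_h \times \bar{V}_h$, the norm equivalence \cref{eq:equivalentNorms_v_vprime} applies and yields $\tnorm{\boldsymbol{w}_h}_{v'} \le c(1 + \alpha_v^{-1}) \tnorm{\boldsymbol{w}_h}_v$. The bound on $\tnorm{\boldsymbol{w}_h}_v$ is precisely the one established inside the proof of \cref{thm:pressure_robust}: combining \cref{lem:stab_ah,lem:bound_ah} with consistency and the constraint $b_h(\boldsymbol{q}_h, w_h) = 0$ gives $\tnorm{\boldsymbol{w}_h}_v \le (C_a/\beta_v) \tnorm{\boldsymbol{u} - \boldsymbol{v}_h}_{v'}$. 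Stringing these together controls the discrete part once more by the $\tnorm{\cdot}_{v'}$ norm of the interpolation error.

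It then remains to bound $\tnorm{\boldsymbol{u} - \boldsymbol{v}_h}_{v'}$, which is exactly the interpolation estimate of \cref{lem:interpolationbound}, giving $\tnorm{\boldsymbol{u} - \Pi\boldsymbol{u}}_{v'} \le c h^k \norm{u}_{k+1, \Omega}$. Collecting the two contributions produces the claimed bound $\tnorm{\boldsymbol{u} - \boldsymbol{u}_h}_{v'} \le c h^k \norm{u}_{k+1, \Omega}$, with the final constant absorbing the factor $1 + c(1 + \alpha_v^{-1})(C_a/\beta_v)$.

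The main obstacle, and the reason the result is phrased as a corollary rather than being immediate, is that the norm equivalence \cref{eq:equivalentNorms_v_vprime} holds only on the finite element space $V_h \times \bar{V}_h$ and therefore cannot be applied to $\boldsymbol{u} - \boldsymbol{u}_h$ directly, since $\boldsymbol{u}$ is not a finite element function. The decomposition above is engineered precisely so that the equivalence is invoked only on the discrete quantity $\boldsymbol{w}_h$, while the non-discrete interpolation error is controlled directly in the $\tnorm{\cdot}_{v'}$ norm through \cref{lem:interpolationbound}. As in \cref{thm:pressure_robust}, the argument rests on the $H({\rm div})$-conformity underlying the identity $b_h(\boldsymbol{p}, w_h) = 0$, and hence applies to the HDG and EDG--HDG formulations but not to the EDG method.
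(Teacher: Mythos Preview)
Your proposal is correct and follows essentially the same route the paper indicates: repeat the argument of \cref{thm:pressure_robust} and invoke the norm equivalence \cref{eq:equivalentNorms_v_vprime} only on the discrete piece $\boldsymbol{w}_h$, then close with the interpolation estimate \cref{lem:interpolationbound}. Your observation that the equivalence cannot be applied to $\boldsymbol{u}-\boldsymbol{u}_h$ directly, forcing the split into $(\boldsymbol{u}-\Pi\boldsymbol{u})-\boldsymbol{w}_h$, is exactly the point behind the paper's one-line justification of the corollary.
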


To prove a velocity error estimate in the $L^2$-norm we will rely on the
following regularity assumption. If $(u,p)$ solves the Stokes problem
\cref{eq:stokes} for $f \in \sbr[1]{L^2(\Omega)}^d$, we have on a convex
polygonal domain
\begin{equation}
  \label{eq:regularityassumption}
  \nu \norm{u}_{2,\Omega} + \norm{p}_{1,\Omega} \le c_r\norm{f}_{\Omega},
\end{equation}
where $c_r$ is a constant~\cite[Chapter~II]{Girault:book}.

\begin{lemma}[Boundedness of $a_{h}$ on the extended space] There exists
  a $C_a > 0$, independent of $h$, such that for all $\boldsymbol{u} \in
  V(h) \times \bar{V}(h)$ and for all $\boldsymbol{u} \in V(h) \times
  \bar{V}(h)$
  \begin{equation}
    \label{eq:bound_ah_infinite}
    \envert{a_h(\boldsymbol{u}, \boldsymbol{v})}
    \le C_a \nu \tnorm{\boldsymbol{u}}_{v'} \tnorm{\boldsymbol{v}}_{v'}.
  \end{equation}
\end{lemma}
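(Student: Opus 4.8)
The plan is to bound each of the three contributions to $a_h$ separately, using the Cauchy--Schwarz inequality on each integral, and then to recognize the resulting factors as the defining terms of the $\tnorm{\cdot}_{v'}$ norm. The key observation is that the extra term in $\tnorm{\cdot}_{v'}$ compared to $\tnorm{\cdot}_v$, namely $\sum_K (h_K/\alpha_v)\norm{\partial v/\partial n}^2_{\partial K}$, is precisely what allows us to control the consistency (symmetrizing) terms in $a_h$ without recourse to the discrete trace inequality \cref{eq:trace_inequality}. This is exactly why boundedness holds on the full extended space $V(h)\times\bar V(h)$ here, whereas \cref{lem:bound_ah} only claimed boundedness against a test function in the finite-element space $V_h\times\bar V_h$.

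First I would treat the volume term $\sum_K \int_K \nu\nabla u:\nabla v\dif x$. By Cauchy--Schwarz on each cell and then on the sum over cells, this is bounded by $\nu\del{\sum_K\norm{\nabla u}_K^2}^{1/2}\del{\sum_K\norm{\nabla v}_K^2}^{1/2}$, and both factors are dominated by $\tnorm{\cdot}_v\le\tnorm{\cdot}_{v'}$. Second, I would treat the penalty term $\sum_K\int_{\partial K}(\nu\alpha_v/h_K)(u-\bar u)\cdot(v-\bar v)\dif s$; writing the weight as $(\alpha_v/h_K)^{1/2}(u-\bar u)$ against $(\alpha_v/h_K)^{1/2}(v-\bar v)$ and applying Cauchy--Schwarz gives exactly the product of the second terms in the two $\tnorm{\cdot}_v$ norms, again controlled by $\tnorm{\cdot}_{v'}$.

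The third, and most delicate, contribution is the pair of consistency terms $-\sum_K\int_{\partial K}\nu\sbr{(u-\bar u)\cdot\partial v/\partial n + \partial u/\partial n\cdot(v-\bar v)}\dif s$. For the first of these I would split the integrand as $(\alpha_v/h_K)^{1/2}(u-\bar u)$ paired with $(h_K/\alpha_v)^{1/2}\partial v/\partial n$; Cauchy--Schwarz then yields $\nu\del{\sum_K(\alpha_v/h_K)\norm{u-\bar u}^2_{\partial K}}^{1/2}\del{\sum_K(h_K/\alpha_v)\norm{\partial v/\partial n}^2_{\partial K}}^{1/2}$, where the first factor is bounded by $\tnorm{\boldsymbol{u}}_v\le\tnorm{\boldsymbol{u}}_{v'}$ and the second is exactly the additional term in $\tnorm{\boldsymbol{v}}_{v'}$. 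The symmetric term is handled identically with the roles of $\boldsymbol{u}$ and $\boldsymbol{v}$ interchanged. Collecting the three bounds and absorbing the finite number of constants (each independent of $h$) into a single $C_a$ gives \cref{eq:bound_ah_infinite}.

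I expect the only genuine subtlety to be bookkeeping rather than mathematics: one must verify that every term on the right-hand side is one of the two pieces of $\tnorm{\cdot}_{v'}^2$, taking care that the weights $h_K$, $\alpha_v$ split symmetrically so that no stray factor of $h_K$ or $\alpha_v$ survives. This is the step where the choice of norm $\tnorm{\cdot}_{v'}$ is doing all the work, and it is worth noting explicitly that the estimate holds on the extended space precisely because $\partial u/\partial n$ and $\partial v/\partial n$ are both now controlled by the third term of $\tnorm{\cdot}_{v'}$, so that, unlike in \cref{lem:bound_ah}, no inverse or discrete trace inequality is invoked and no restriction to polynomial spaces is needed.
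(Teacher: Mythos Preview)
Your proposal is correct and is precisely the standard argument one expects for boundedness of an interior-penalty bilinear form in the mesh-dependent norm $\tnorm{\cdot}_{v'}$. The paper itself does not spell out the proof but simply states that it is identical to that of \cite[Lemma~4.3]{Rhebergen:2017}; your term-by-term Cauchy--Schwarz splitting, with the $(\alpha_v/h_K)^{1/2}$--$(h_K/\alpha_v)^{1/2}$ weights on the consistency terms, is exactly that argument, and your observation that no discrete trace inequality is needed (hence the result holds on the extended space) is the salient point.
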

The proof of this is identical to that
for~\cite[Lemma~4.3]{Rhebergen:2017}.

\begin{theorem}[Pressure robust velocity error estimate in the
  $L^2$-norm] Let $(u,p) \in \sbr[1]{H^{k + 1}(\Omega)}^d \times
  H^{k}(\Omega)$ solve the Stokes problem \cref{eq:stokes} with $k \ge
  1$, and let $\boldsymbol{u} = (u, u)$ and $\boldsymbol{p} = (p, p)$.
  If $(\boldsymbol{u}_h, \boldsymbol{p}_h) \in X_h$ solves the finite
  element problem \cref{eq:discrete_problem} for the HDG or EDG--HDG
  formulation then, subject to the regularity condition in
  \cref{eq:regularityassumption}, there exists a constant $C_V>0$,
  independent of $h$, such that
  \begin{equation}
    \label{eq:pressure_robust_l2}
    \norm{u - u_{h}}_{\Omega} \le C_{V} h^{k+1} \norm{u}_{k + 1, \Omega}.
  \end{equation}
\end{theorem}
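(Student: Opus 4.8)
The plan is an Aubin--Nitsche duality argument that mirrors the proof of \cref{thm:pressure_robust}, the point being that every term involving the pressure error cancels because the velocity error is pointwise divergence-free and $H(\mathrm{div})$-conforming. First I would introduce the dual Stokes problem: find $(\phi, \xi)$ with $-\nu\nabla^2\phi + \nabla\xi = u - u_h$ and $\nabla\cdot\phi = 0$ in $\Omega$, $\phi = 0$ on $\partial\Omega$. Since $u - u_h \in \sbr[1]{L^2(\Omega)}^d$, the regularity assumption \cref{eq:regularityassumption} gives $\nu\norm{\phi}_{2,\Omega} + \norm{\xi}_{1,\Omega}\le c_r\norm{u-u_h}_\Omega$. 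Writing $\boldsymbol\phi = (\phi,\phi)$ and $\boldsymbol\xi = (\xi,\xi)$, adjoint consistency of $B_h$ (which follows from the symmetry of $a_h$ together with \cref{lem:consistency}, the boundedness estimate \cref{eq:bound_ah_infinite} legitimizing insertion of the merely $H^2\times H^1$-regular dual solution) yields
\begin{equation}
  \norm{u-u_h}_\Omega^2 = B_h((\boldsymbol u - \boldsymbol u_h, \boldsymbol p - \boldsymbol p_h), (\boldsymbol\phi, \boldsymbol\xi)).
\end{equation}

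Next I would invoke Galerkin orthogonality: subtracting \cref{eq:discrete_problem} from \cref{lem:consistency} gives $B_h((\boldsymbol u - \boldsymbol u_h, \boldsymbol p - \boldsymbol p_h), (\boldsymbol v_h, \boldsymbol q_h)) = 0$ for all $(\boldsymbol v_h, \boldsymbol q_h)\in X_h$, so the dual may be replaced by its interpolation error,
\begin{equation}
  \norm{u-u_h}_\Omega^2 = B_h((\boldsymbol u - \boldsymbol u_h, \boldsymbol p - \boldsymbol p_h), (\boldsymbol\phi - \Pi\boldsymbol\phi, \boldsymbol\xi - \boldsymbol\xi_h)),
\end{equation}
where $\Pi\boldsymbol\phi = (\Pi_{\rm BDM}\phi, \Pi_{L^2(\Gamma_0)}\phi)$ and $\boldsymbol\xi_h\in X_h^q$ is any admissible projection of the dual pressure. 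Expanding via \cref{eq:Bh}, the crux is that both pressure-coupling contributions vanish. The term $b_h(\boldsymbol p - \boldsymbol p_h, \phi - \Pi_{\rm BDM}\phi)$ is zero because $\nabla\cdot\phi = 0$ together with \cref{item:bdm_div} of \cref{lem:BDMprojection} forces $\nabla\cdot\Pi_{\rm BDM}\phi = 0$, while the facet contribution vanishes by property~(v) of \cref{lem:BDMprojection} (for the discrete facet pressure) and by continuity of $n\cdot(\phi - \Pi_{\rm BDM}\phi)$ across facets (for the exact trace). The term $b_h(\boldsymbol\xi - \boldsymbol\xi_h, u - u_h)$ is zero because $\nabla\cdot(u-u_h) = 0$ on each cell and $u - u_h$ is $H(\mathrm{div})$-conforming with vanishing normal trace on $\partial\Omega$. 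This is exactly the step that fails for the EDG method.

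Only the vector-Laplacian term then survives, $\norm{u-u_h}_\Omega^2 = a_h(\boldsymbol u - \boldsymbol u_h, \boldsymbol\phi - \Pi\boldsymbol\phi)$, which I would bound with the extended-space boundedness \cref{eq:bound_ah_infinite}, the $\tnorm{\cdot}_{v'}$ estimate of \cref{cor:error_prime_norm}, and \cref{lem:interpolationbound} applied to $\phi\in\sbr[1]{H^2(\Omega)}^d$ (i.e.\ with $k=1$):
\begin{equation}
  \norm{u-u_h}_\Omega^2 \le C_a\nu\tnorm{\boldsymbol u - \boldsymbol u_h}_{v'}\tnorm{\boldsymbol\phi - \Pi\boldsymbol\phi}_{v'} \le c\,\nu\, h^{k}\norm{u}_{k+1,\Omega}\,h\,\norm{\phi}_{2,\Omega}.
\end{equation}
Substituting $\nu\norm{\phi}_{2,\Omega}\le c_r\norm{u-u_h}_\Omega$ from the regularity bound and cancelling one factor of $\norm{u-u_h}_\Omega$ then delivers \cref{eq:pressure_robust_l2}.

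I expect the main obstacle to be the rigorous justification of the two identities that make the estimate pressure robust: establishing the adjoint-consistency relation for the low-regularity dual solution (where only \cref{eq:bound_ah_infinite} controls the extended-space arguments), and verifying in full detail that both $b_h$ contributions vanish. The facet cancellations rely on single-valued normal traces of $H(\mathrm{div})$-conforming fields and on the moment conditions of \cref{lem:BDMprojection}; any surviving pressure term would reintroduce precisely the $\nu^{-1}$-scaled pressure dependence that \cref{thm:pressure_robust} and this corollary are designed to avoid.
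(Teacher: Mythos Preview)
Your proposal is correct and follows essentially the same Aubin--Nitsche duality argument as the paper: introduce the dual Stokes solution, exploit that the pressure-coupling terms $b_h$ vanish because both $u-u_h$ and $\Pi_{\rm BDM}\phi$ are pointwise divergence-free and $H(\mathrm{div})$-conforming, and then bound the surviving $a_h$ term via \cref{eq:bound_ah_infinite}, \cref{lem:interpolationbound}, \cref{cor:error_prime_norm}, and the regularity estimate. The only organizational difference is that the paper reduces to $a_h$ immediately---writing $\norm{u-u_h}_\Omega^2 = a_h(\boldsymbol{\zeta}_u,\boldsymbol{u}-\boldsymbol{u}_h)$ and then subtracting $a_h(\Pi\boldsymbol{\zeta}_u,\boldsymbol{u}-\boldsymbol{u}_h)=0$---whereas you carry the full $B_h$ form through Galerkin orthogonality and verify both $b_h$ cancellations on the interpolation-error side; the substance is identical.
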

\begin{proof}
  Let $(\zeta_{u}, \zeta_{p}) \in X$ solve the Stokes problem
  \cref{eq:stokes} for $f = (u - u_h)$. Then
  \begin{equation}
    a_{h}\del{(\zeta_{u}, \zeta_{u}), \boldsymbol{v} }
    +
    b_{h}\del{(\zeta_{p}, \zeta_{p}), v}
    =
    \int_{\Omega} (u - u_{h}) \cdot v \dif x
    \quad \forall \ \boldsymbol{v} \in  V(h) \times \bar{V}(h).
  \end{equation}
  Setting $\boldsymbol{v} = \boldsymbol{u} - \boldsymbol{u}_{h}$ and
  noting that $b_{h}\del{(\zeta_{p}, \zeta_{p}), v} = 0$ by the
  regularity of $\zeta_{p}$ and by $v$ being divergence-free and $H({\rm
  div})$-conforming, we have
  \begin{equation}
    a_{h}\del{(\zeta_{u}, \zeta_{u}), \boldsymbol{u} - \boldsymbol{u}_{h} }
    =
    \norm{u - u_{h}}^{2}_{\Omega}.
  \end{equation}
  Note also that
  \begin{equation}
    a_{h}\del{\boldsymbol{v}_{h}, \boldsymbol{u} - \boldsymbol{u}_{h} }
    = 0 \quad \forall \ \boldsymbol{v}_{h} \in X_{h}^{v}
  \end{equation}
  by adjoint consistency. Setting $\boldsymbol{v}_{h} = \Pi
  \boldsymbol{\zeta}_{u}$ where $\Pi$ is the projection in
  \cref{lem:interpolationbound}, then by boundedness of $a_{h}$
  \cref{eq:bound_ah_infinite},
  \begin{equation}
    \begin{split}
      \norm{u - u_{h}}_{\Omega}^{2}
      &=
      a_{h}\del{\boldsymbol{\zeta}_{u} - \Pi\boldsymbol{\zeta}_{u}, \boldsymbol{u} - \boldsymbol{u}_{h} }
      \\
      &\le
      C \nu
      \tnorm{\boldsymbol{\zeta}_{u} - \Pi\boldsymbol{\zeta}_{u}}_{v'}
      \tnorm{\boldsymbol{u} - \boldsymbol{u}_{h}}_{v'}
      \\
      &\le
      C \nu
      h \norm{\zeta_{u}}_{2}
      \tnorm{\boldsymbol{u} - \boldsymbol{u}_{h}}_{v'}
      \\
      &\le
      C
      h \norm{u - u_{h}}
      \tnorm{\boldsymbol{u} - \boldsymbol{u}_{h}}_{v'},
    \end{split}
  \end{equation}
  hence
  \begin{equation}
    \label{eq:finalequation}
    \norm{u - u_{h}}_{\Omega}
    \le
    C h \tnorm{\boldsymbol{u} - \boldsymbol{u}_{h}}_{v'}.
  \end{equation}
  The result follows from applying \cref{cor:error_prime_norm} to
  $\tnorm{\boldsymbol{u} - \boldsymbol{u}_{h}}_{v'}$. \qed
\end{proof}
\bibliographystyle{elsarticle-num-names}
\bibliography{references}
\end{document}